\newtheorem{theorem}{Theorem}[section]
\newtheorem{definition}[theorem]{Definition}
\newtheorem{lemma}[theorem]{Lemma}
\newtheorem{corollary}[theorem]{Corollary}
\newtheorem{remark}[theorem]{Remark}
\newtheorem{proposition}[theorem]{Proposition}
\newtheorem{problem}[theorem]{Problem}
\newtheorem*{rep@theorem}{\rep@title}
\newcommand{\newreptheorem}[2]{%
\newenvironment{rep#1}[1]{%
 \def\rep@title{#2 \ref{##1}}%
 \begin{rep@theorem}}%
 {\end{rep@theorem}}}
\declaretheoremstyle[headfont=\normalfont]{normalhead}
\newcommand{\R}{\mathbb{R}}
\DeclareMathOperator{\vol}{vol}
\DeclareMathOperator{\e}{\epsilon}
\begin{document}


\title[norm inequalities]{A Busemann-Petty type problem for Dual Radon transforms}

\author[M.~Roysdon]{Michael Roysdon}

\address[M.~Roysdon]{Department of Mathematics, Applied Mathematics, and Statistics, Case Western Reserve, Cleveland, OH 44106, USA.}
\email{{\tt mar327@case.edu}}

\subjclass[2020]{Primary: 52A20, 42B10, 46F12; Secondary: 44A12} 
\keywords{Radon transforms, dual Radon transform, Busemann-Petty Problem, $L^p$-$L^q$-estimates}	
\thanks{This research is supported in part by an AMS-Simons Travel Grant}

\begin{abstract} 

Inspired by resolution of the Busemann-Petty problem (1956), we consider the following comparison problem for dual Radon transforms: Given a pair of continuous functions defined on the affine Grassmannian whose dual Radon transforms satisfy a pointwise inequality, can their $L^p$ norms be compared in a meaningful way? We characterize the solution to this problem for each $p \geq 1$, and as a consequence of our investigation, we prove reverse $L^p$-$L^q$-estimates for dual Radon transforms. In particular, we reverse an inequality of Solmon (1979).

\end{abstract}
 
\maketitle

\section{Introduction}
In $1956$ Busemann and Petty in \cite{BP} published a list of ten questions in the geometry of convex bodies, only one of which has been fully resolved. The Busemann-Petty problem (\cite[Problem~1]{BP}) asks the following seemingly simple geometric question: given a pair of origin-symmetric convex bodies (compact, convex subsets with non-empty interior) $K, L \subset \R^n$ such that, for each direction $\theta \in S^{n-1}$,
\begin{equation}\label{e:BPassumption}
\vol_{n-1}(K \cap \theta^\perp) \leq \vol_{n-1}(L \cap \theta^\perp),
\end{equation}
does it necessarily follow that $\vol_n(K) \leq \vol_n(L)$?  Here $\vol_m(\cdot)$ denotes the usual volume in the appropriate dimension, and for $\theta \in S^{n-1}$, $\theta^\perp$ denotes the hyperplane in $\R^n$ orthogonal to $\theta$. The answer to the Busemann-Petty problem was resolved at the end of the 1990s. The answer is affirmative when $n \leq 4$ and negative when $n \ge 5$. For a historical recount and the solution see \cite{Gardner,GKS,Kold1}.

The solution to the Busemann-Petty problem appearing in \cite{GKS} draws on the papers \cite{Lutwak} (due to Lutwak) and \cite{Kold5} (due to Koldobsky). In \cite{Lutwak} the notion of an intersection body of a convex body was introduced, which gave the first affirmative answer to the Busemann-Petty problem in all dimensions for a class of convex bodies. A more general class of convex bodies, called intersection bodies, was examined in \cite{GLW}, and where the authors verified that class of intersection bodies also gives an affirmative answer to the Busemann-Petty problem. 



It was shown in \cite{Lutwak} that the Busemann-Petty problem is affirmative in $\R^n$ if and only if every origin-symmetric convex body is an intersection body. Koldobsky showed in \cite{Kold5} that an origin-symmetric convex body $K \subset \R^n$ is an intersection body if and only if $\|\cdot\|_K^{-1}$ represents a positive definite distribution on $\R^n$.  Here $\|\cdot\|_K$ denotes the norm on $\R^n$ with unit ball $K$. 


The spherical Radon transform of a continuous function $f$ defined on the sphere $S^{n-1}$ is given by 
\[
Rf(\theta) = \int_{S^{n-1} \cap \theta^\perp} f(\xi)d\xi, \quad \theta \in S^{n-1}. 
\]
The next comparison problem for the spherical Radon transform was considered in \cite{KRZ} (see also \cite{Kold25}):

\begin{problem} \label{p:sphericalprolem} Let $p \geq1$. Given a pair of positive, even and continuous functions $f,g$ defined on $S^{n-1}$ such that 
\begin{equation}\label{e:sphericalsomparisonassumpt}
Rf(\theta) \leq Rg(\theta) \quad \text{ for all } \theta \in S^{n-1},
\end{equation}
does it necessarily follow that $\|f\|_{L^p(S^{n-1})} \leq \|g\|_{L^p(S^{n-1})}$?
\end{problem}

It can be shown by integrating the condition \eqref{e:BPassumption} in polar coordinates that Problem~\ref{p:sphericalprolem} contains the Busemann-Petty problem as a special case.  The problem was classified in terms of positive definite distributions in the following sense: the comparison problem is affirmative if and only if every even, continuous, and non-negative function $f \colon S^{n-1} \to (0,\infty)$ satisfies the condition that $|x|_2^{-1} f^{p-1} \left(\frac{x}{|x|_2}\right)$ represents a positive definite distribution on $\R^n$ provided $p > 1$. In particular, there are counterexamples when one considers the case of the Busemann-Petty problem.


  Given an integrable function $\varphi \colon \R^n \to \R$, the classical (spatial) Radon transform of $\varphi$ is defined to be 
\[
\mathcal{R}\varphi(t,\theta) = \int_{x\cdot\theta = t} f(x) dx,
\]
which is well defined almost everywhere, while the dual Radon transform of an even and integrable function $g \colon \R \times S^{n-1} \to \R$ is given by
\[
\mathcal{R}^*g(x) = \int_{S^{n-1}}g(\langle x,\theta \rangle, \theta) d\theta.
 \]

For the properties of the Radon transform, its dual and their applications, we refer the reader to \cite{AGM1,Gardner,Groemer,Helgason,Helgason2,Kold1,Markoe,Rubin1,Schneider}.

 In the present paper, we are interested in studying the next full-dimensional version of the Busemann-Petty problem for the dual Radon transform. 

\begin{problem}[Non-homogeneous Busemann-Petty problem] \label{p:NonhomogeneousBPproblem} Let $p \geq1$. Given a pair of even and continuous functions $g,h \colon \R \times S^{n-1} \to [0,\infty)$ vanishing at infinity (i.e. in $C_0$), with $g,h \in L^1(\R \times S^{n-1}) \cap L^p(\R \times S^{n-1})$, such that $\mathcal{R}^*g(x) \leq \mathcal{R}^*h(x)$
holds for all $x \in \R^n$, does it necessarily follow that 
$\|g\|_{L^p(\R \times S^{n-1})} \leq \|h\|_{L^p(\R \times S^{n-1})}?$
\end{problem}

We have the following characterization for Problem~\ref{p:NonhomogeneousBPproblem}

\begin{theorem}\label{t:basiccase} Let $n \geq 2$ and $p \geq 1$.
\begin{itemize}
    \item[(1)] Given any pair of functions $g,h \in C_0(\R \times S^{n-1}) \cap L^1(\R \times S^{n-1})$ that are even, non-negative and such that $R^*g \leq R^*h$, it follows that $\|g\|_{L^1(\R \times S^{n-1})} \leq \|h\|_{L^1(\R \times S^{n-1})}$.
    \item[(b)] Let $p >1$. Suppose that $g,h \in C_0(\R \times S^{n-1}) \cap L^p(\R \times S^{n-1})$ are even, non-negative functions such that $R^*g \leq R^*h$. If $g^{p-1} = \mathcal{R}\mu$ for some finite and even Borel measure $\mu$ defined on $\R^n$, then $\|g\|_{L^p(\R \times S^{n-1})} \leq \|h\|_{L^p(\R \times S^{n-1})}$. 
    \item[(c)]  Let $p >1$. Suppose that $h \in C_c^\infty(\R \times S^{n-1})$ is even, nonnegative and such that:
    \begin{itemize}
        \item $
    \int_{\R}h(t,\theta) t^m dt$ is an homogeneous $m$-th degree polynomial
    for every $\theta \in S^{n-1}$ and $m \geq 1$, and 
    \item $h = \mathcal{R} \varphi$ for some $\varphi \in C_c^\infty(\R^n)$, with $\varphi <0$ somewhere.
    \end{itemize}
    Then, there exists a non-negative function $g \in C_c^\infty(\R \times S^{n-1})$ such that $\mathcal{R}^*g \leq \mathcal{R}^*h$ and yet $\|g\|_{L^p(\R \times S^{n-1})} > \|h\|_{L^p(\R \times S^{n-1})}$.
\end{itemize}
\end{theorem}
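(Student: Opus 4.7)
My plan handles the three parts with progressively heavier duality machinery.

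For part (1), I would test the pointwise inequality $\mathcal{R}^* g \le \mathcal{R}^* h$ against a Gaussian $\psi(x) = (2\pi)^{-n/2} e^{-|x|^2/2}$ on $\R^n$. Parametrizing $x = t\theta + y$ with $y \perp \theta$ gives $\mathcal{R}\psi(t,\theta) = (2\pi)^{-1/2} e^{-t^2/2}$, independent of $\theta$; for the dilation $\psi_R(x) = \psi(x/R)$ this yields
\[
R^{-(n-1)}\,\mathcal{R}\psi_R(t,\theta) \;=\; (2\pi)^{-1/2}\, e^{-t^2/(2R^2)},
\]
an increasing family in $R$ converging pointwise to $(2\pi)^{-1/2}$. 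Integrating $\mathcal{R}^*g \le \mathcal{R}^*h$ against $\psi_R$, using the adjoint identity $\int_{\R^n}(\mathcal{R}^* f)\psi_R\,dx = \int_{\R \times S^{n-1}} f\cdot \mathcal{R}\psi_R\,dt\,d\theta$, dividing by $R^{n-1}$, and invoking monotone convergence as $R \to \infty$ (valid because $g, h \ge 0$ lie in $L^1$) extracts $\|g\|_{L^1} \le \|h\|_{L^1}$.

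For part (b), a single duality plus H\"older suffices. Since $\mu \ge 0$, integrating $\mathcal{R}^* g \le \mathcal{R}^* h$ against $\mu$ preserves the inequality, and Fubini gives the adjointness $\int_{\R^n}(\mathcal{R}^* f)\,d\mu = \int_{\R \times S^{n-1}} f\cdot \mathcal{R}\mu\,dt\,d\theta$. Substituting $g^{p-1} = \mathcal{R}\mu$ turns the left-hand side into $\int g\cdot g^{p-1} = \|g\|_{L^p}^p$, while H\"older with conjugate exponents $p$ and $p/(p-1)$ bounds the right-hand side by $\|h\|_{L^p}\,\|g^{p-1}\|_{L^{p/(p-1)}} = \|h\|_{L^p}\,\|g\|_{L^p}^{p-1}$; dividing yields the claim.

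Part (c) calls for a Hahn--Banach separation followed by a first-order perturbation. My aim is to produce $\eta \in C_c^\infty(\R \times S^{n-1})$, even, with $\mathcal{R}^*\eta \le 0$ pointwise and $\int h^{p-1}\eta\,dt\,d\theta > 0$. Then $g := h + \epsilon \eta$ satisfies $\mathcal{R}^* g \le \mathcal{R}^* h$, and the expansion $\|h + \epsilon \eta\|_{L^p}^p = \|h\|_{L^p}^p + \epsilon p \int h^{p-1}\eta + O(\epsilon^2)$ gives $\|g\|_{L^p} > \|h\|_{L^p}$ for small $\epsilon > 0$; arranging $\eta$ to be supported in the open set $\{h > 0\}$ and shrinking $\epsilon$ further secures $g \ge 0$. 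By the adjointness exploited in part (b), the polar cone of $\mathcal{C} := \overline{\{\mathcal{R}\nu : \nu \ge 0 \text{ a finite Borel measure on }\R^n\}}$ (in an appropriate weak topology) coincides with $\{\eta : \mathcal{R}^*\eta \le 0\}$, so the existence of such $\eta$ is equivalent, via Hahn--Banach separation, to $h^{p-1} \notin \mathcal{C}$.

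The \emph{main obstacle} is establishing $h^{p-1} \notin \mathcal{C}$. For $p = 2$ this is transparent: $h = \mathcal{R}\varphi \in \mathcal{C}$ together with the injectivity of $\mathcal{R}$ on $\mathcal{E}'(\R^n)$ would force $\varphi$ to be the density of a positive measure, contradicting the hypothesis that $\varphi < 0$ somewhere. For general $p > 1$ I expect to work harder: every element of $\mathcal{C}$ inherits the Helgason moment identity $\int \mathcal{R}\nu(t,\theta)\,t^m\,dt = \int \langle y, \theta\rangle^m\,d\nu(y)$, a homogeneous polynomial of degree $m$ in $\theta$, so membership $h^{p-1} \in \mathcal{C}$ imposes a moment constraint that generically fails when $p \neq 2$. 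Combining this moment obstruction with the negativity of $\varphi$ to rule out borderline cases, and simultaneously extracting an $\eta$ that is smooth, compactly supported, and supported where $h > 0$, is the technical heart of the argument.
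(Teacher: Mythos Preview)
Your arguments for parts (1) and (b) coincide with the paper's. For (1) the paper (Proposition~\ref{prop:p=1}, specialised to $k=1$) also tests the inequality $\mathcal R^*g\le\mathcal R^*h$ against a Gaussian, uses the duality identity, and sends the width parameter to infinity; the only cosmetic difference is that the paper invokes dominated rather than monotone convergence. For (b) the paper (Theorem~\ref{t:affirmative}) runs exactly your chain $\|g\|_p^p=\langle g^{p-1},g\rangle=\langle\mu,\mathcal R^*g\rangle\le\langle\mu,\mathcal R^*h\rangle=\langle g^{p-1},h\rangle\le\|g\|_p^{p-1}\|h\|_p$.

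For part (c) your plan departs from the paper's and carries the gap you yourself flag. The paper (Theorem~\ref{t:counterexample}) uses neither Hahn--Banach nor a first-order expansion; it works constructively on the $\R^n$ side. From $h^{p-1}=\mathcal R\varphi$ with $\varphi<0$ on an open set $\mathcal O$, it selects a function $\eta$ in the range $\mathcal R^*(C_c^\infty)$ that vanishes on $\operatorname{supp}\varphi\setminus\mathcal O$ and is positive elsewhere, lifts it to $f$ with $\mathcal R^*f=\eta$, and puts $g=h-\epsilon f$. Then $\varphi\eta\le 0$ everywhere with strict inequality on $\mathcal O$, so the one-line computation
\[
\|h\|_p^p=\langle\varphi,\mathcal R^*h\rangle<\langle\varphi,\mathcal R^*h-\epsilon\eta\rangle=\langle h^{p-1},g\rangle\le\|g\|_p\|h\|_p^{p-1}
\]
finishes via H\"older. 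No separation theorem is needed because the negative part of $\varphi$ already serves as the witness.

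Your route instead reduces to proving $h^{p-1}\notin\overline{\{\mathcal R\nu:\nu\ge 0\}}$, which you settle only for $p=2$. The moment heuristic you sketch for general $p$ makes no use of the hypothesis that $\varphi$ is somewhere negative, so it cannot be the right mechanism; and even if separation succeeds, extracting an $\eta$ that is smooth, compactly supported, and supported inside $\{h>0\}$ is additional work that an abstract Hahn--Banach argument does not deliver. Note finally that the paper's construction relies on expressing $h^{p-1}$ (not $h$) as $\mathcal R\varphi$; compare the hypothesis in Theorem~\ref{t:counterexample} with the wording of Theorem~\ref{t:basiccase}(c).
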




The conclusion of the Busemann-Petty problem is negative in most dimensions, so it makes sense to ask if it holds up to some absolute constant (\cite{MP}): Given any pair of origin-symmetric convex bodies $K,L \subset \R^n$ satisfying the condition \eqref{e:BPassumption}, does it follow that $\vol_n(K) \leq C \vol_n(L)$ for some absolute constant $C>0$? This problem is called the isomorphic Busemann-Petty problem.  It was shown in \cite{MP}, that the isomorphic Busemann-Petty problem is equivalent to the slicing problem of Bourgain \cite{Bourgain0,Bourgain2}: Does there exist an absolute constant $C >0$ such that, for any $n \in \mathbb{N}$ and for any origin-symmetric convex body $K$ in $\R^n$,
\begin{equation}\label{e:Bourgain}
\vol_n(K)^{\frac{n-1}{n}} \leq C \max_{\theta \in S^{n-1}} \vol_{n-1}(K \cap \theta)^{\perp}?
\end{equation}
The history of these two problems is vast and has been investigated by numerous authors. For prominant works concerning the slicing problem, see \cite{Bourgain2, Klartag, Chen, KLeh, JLV, Klartag2, Guan, KLeh2}. In particular, Klartag and Lehec \cite{KLeh2} resolved the slicing problem (and by proxy the isomorphic Busemann-Petty problem) in the affirmative by proving that there is an absolute constant $C >0$ such that 
\begin{equation}\label{e:functionalBourgain}
\|f\|_{L^{\frac{n}{n-1}}(S^{n-1})} \leq C \| Rf\|_{L^\infty(S^{n-1})},
\end{equation}
holds, where $f = \|\cdot\|_K^{-n+1}$ for a convex body $K$ containing the origin. Extensions and analogs of the slicing problem to arbitrary functions were studied in \cite{CGL,GKZ,GK,GrK,KK,KL,Kold2,Kold3,Kold4,KPZ}. Estimates akin to \eqref{e:functionalBourgain} were established by Bennett and Tao in \cite{BT} in the case $0 < p \leq 1$. Moreover, in \cite{KRZ} the next reverse form of \eqref{e:functionalBourgain} was a consequence of the investigation of Problem~\ref{p:sphericalprolem}. Let $p > 1$ and $f \colon S^{n-1} \to (0,\infty)$ be even and continuous.  If $|x|^{-1} f\left(\frac{x}{|x|_2}\right)^{p-1}$ represents a positive definite distribution on $\R^n$, then 
\begin{equation}\label{e:KRZslicing}
\|f\|_{L^{p}(S^{n-1})} \leq \frac{|S^{n-1}|^{\frac{1}{p}}}{|S^{n-2}|} \|Rf\|_{L^\infty(S^{n-1})}.
\end{equation}

As it turns out, our investigation has implications for $L^p$-$L^q$- estimates for the dual Radon transform. $L^p$-$L^q$-estimates for Radon transforms were first initiated by Solmon in \cite{Solmon2,Solmon1}, and expanded upon by works of Oberlin and Stein \cite{OS}, Oberlin \cite{Oberlin}, Calder\'on \cite{Calderon}, Strichartz \cite{Strichartz}, Christ \cite{Christ}, Drury \cite{D1,D2,D3,D4,D5} and Rubin \cite{Rubin2,Rubin}.

The next slicing inequality follows from our investigation of Problem~\ref{p:NonhomogeneousBPproblem}. It can be seen as an analogue of the inequalities \eqref{e:functionalBourgain} and \eqref{e:KRZslicing} for  dual Radon transforms of smooth functions, and also serves as a mean-value property for the dual Radon transform. 

\begin{theorem}\label{t:basicslicing} Let $n\geq 2$ and $p > 1$, and let $w \in L^1(\R \times S^{n-1}) \cap C_c(\R \times S^{n-1})$ be even and non-negative. Then, for any even function $g \in C^\infty(\R \times S^{n-1})$, which decays faster at infinity than any power of $|\cdot|_2$, the following inequality holds:
\[
\|g\|_{L^p(\R \times S^{n-1},w)} \leq \left\|\frac{\mathcal{R}^*[gw]}{\mathcal{R}^*w}\right\|_{L^\infty(\R \times S^{n-1})} \|w\|_{L^1(\R \times S^{n-1})}^{\frac{1}{p}}
\]
\end{theorem}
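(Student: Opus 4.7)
Set $M := \bigl\|\mathcal{R}^*[gw]/\mathcal{R}^*w\bigr\|_{L^\infty(\R^n)}$, so that the pointwise inequality $\mathcal{R}^*[gw](x)\leq M\,\mathcal{R}^*w(x)$ holds for every $x\in\R^n$ (both sides vanishing where $\mathcal{R}^*w=0$, since $w\geq 0$). Without loss of generality take $g\geq 0$. The plan is to mimic the duality computation underpinning Theorem~\ref{t:basiccase}(b) and then close with a Hölder step: first derive the sharpened intermediate inequality
\[
\int_{\R\times S^{n-1}} g^{p}\,w\,dt\,d\theta \;\leq\; M\int_{\R\times S^{n-1}} g^{p-1}\,w\,dt\,d\theta,
\]
and then bound the right-hand side via Hölder with conjugate exponents $p$ and $p/(p-1)$.

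To obtain the intermediate inequality I would produce a non-negative measure $\varphi$ on $\R^n$ with $\mathcal{R}\varphi = g^{p-1}$. Since $g$ is smooth, even, and rapidly decreasing, $g^{p-1}$ inherits the same regularity; and in the presence of the positive-definiteness-type condition appearing in Theorem~\ref{t:basiccase}(b) one recovers such a $\varphi\geq 0$ through Helgason-type inversion of the Radon transform. Integrating the pointwise bound against $d\varphi(x)$ and invoking the duality identity
\[
\int_{\R^n}\mathcal{R}^*F(x)\,d\varphi(x) \;=\; \int_{\R\times S^{n-1}}F(t,\theta)\,\mathcal{R}\varphi(t,\theta)\,dt\,d\theta,
\]
first with $F=gw$ and then with $F=w$, yields $\int gw\,\mathcal{R}\varphi\,dt\,d\theta \leq M\int w\,\mathcal{R}\varphi\,dt\,d\theta$, which is precisely the intermediate inequality because $\mathcal{R}\varphi=g^{p-1}$.

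For the closing step, Hölder's inequality in $L^p(w\,dt\,d\theta)$ applied to the product $g^{p-1}\cdot 1$ gives
\[
\int g^{p-1}\,w\,dt\,d\theta \;\leq\; \Bigl(\int g^p\,w\,dt\,d\theta\Bigr)^{(p-1)/p}\|w\|_{L^1}^{1/p} \;=\; \|g\|_{L^p(w)}^{p-1}\,\|w\|_{L^1}^{1/p}.
\]
Chaining the two estimates produces $\|g\|_{L^p(w)}^{p}\leq M\,\|g\|_{L^p(w)}^{p-1}\,\|w\|_{L^1}^{1/p}$, and dividing by $\|g\|_{L^p(w)}^{p-1}$ (the case $g\equiv 0$ on $\mathrm{supp}\,w$ being trivial) delivers the stated inequality.

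The principal obstacle is producing the non-negative dualizer $\varphi$ with $\mathcal{R}\varphi=g^{p-1}$; this is the exact full-dimensional analogue of the positive-definiteness hypothesis in Theorem~\ref{t:basiccase}(b), and it cannot be bypassed for arbitrary $g$. I expect this is handled either by restricting to the subclass of $g$ for which such a representation is available and then extending to the class in the statement via a density argument in the natural topology on the range of $\mathcal{R}$, or by approximating $g^{p-1}$ by $\mathcal{R}\varphi_n$ with $\varphi_n\geq 0$ and passing to the limit in the intermediate inequality. Once the dualizer is in hand, the duality identity and Hölder's inequality do the rest.
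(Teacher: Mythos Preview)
Your duality-plus-H\"older skeleton is exactly the paper's argument: Theorem~\ref{t:basicslicing} is the $h\equiv 1$, $k=1$ instance of Theorem~\ref{t:generalslicing}, whose proof is your computation line for line (with $g^{p-1}$ replaced by $f^{p-1}$ for an admissible majorant $f\geq g$). You have also correctly isolated the only genuine obstacle, namely producing the nonnegative dualizer.

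Where you remain vague the paper is specific. It packages the approximation into the distance $d_{p,w}(g,\mathcal{A}_p^k)$ of Definition~\ref{d:distance}, which enters Theorem~\ref{t:generalslicing} as a multiplicative defect $d_{p,w}^{\,p-1}$; what is actually required is an $f\in\mathcal{A}_p^k$ with $f\geq g$ and $\|f\|_{L^p(w)}$ arbitrarily close to $\|g\|_{L^p(w)}$, i.e.\ approximation \emph{from above} by admissible functions. For the class of $g$ in the statement the paper does not argue this by hand but invokes \cite[Theorem~7.7(a)]{Solmon} to obtain $d_{p,w}=1$. Your option (b)---approximating $g^{p-1}$ in norm by $\mathcal{R}\varphi_n$ with $\varphi_n\geq 0$---would not close the argument on its own: the duality step needs the pointwise bound $g^{p-1}\leq\mathcal{R}\mu$, and the $L^\infty$ ratio on the right-hand side is not stable under generic limits. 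Your option (a) is in the right spirit, but the ``density'' must be one-sided and land in the $(p,k)$-admissible class; supplying that is precisely the external input from Solmon rather than a soft functional-analytic step.
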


The slicing inequality appearing in the above theorem may be viewed as a reverse form of $L^p$-$L^q$ inequality due to Solmon \cite{Solmon1} (see inequality \eqref{e:Solmon} below).

Both Theorem~\ref{t:basiccase} and Theorem~\ref{t:basicslicing} will follow from a more general study of a comparison problem for the $(n-k)$-dimensional dual Radon transform in the next section.

\section{Main Results}

Let $1 \leq k <n$,with $k$ and $n$, positive integers. We denote by $\mathcal{G}_{n-k,n}$ the Grassmannian manifold of $(n-k)$-dimensional affine subspaces of $\R^n$ and by $G_{n-k,n}$ the usual Grassmannian manifolds of $(n-k)$-dimensional linear subspaces of $\R^n$ endowed with Haar measure $\nu_{n-k}$ normalized so that $\nu_{n-k}(G_{n-k,n}) = \frac{|S^{k-k}||S^{n-k-1}|}{|S^{n-1}|}$, with the absolute value denoting the volume in the appropriate sense.

The $(n-k)$-dimensional Radon transform of a function $f \in L^1(\R^n)$ is defined as
\[
\mathcal{R}_{n-k}f(H,z) = \int_{H}f(y+z)dy \quad \text{for } (H,z) \in \mathcal{G}_{n-k,n},
\]

and the dual $(n-k)$-dimensional Radon transform of a function $g \in L^1(\mathcal{G}_{n-k,n})$ is defined as
\[
\mathcal{R}^*_{n-k}g(x) = \int_{G_{n-k,n}}g\left(H,P_{H^\perp}(x)\right)dx,
\]
where $P_{H^\perp}(x)$ is the orthogonal projection of $x$ onto $H^\perp$.  When we are dealing with the classical Radon transform corresponding to $k =1$, we have the relations
$
\mathcal{R}_{n-1} = \mathcal{R} \quad \text{ and } \quad \mathcal{R}_{n-1}^* = \frac{1}{2}\mathcal{R}^*. 
$

We will work on the following generalization of Problem~\ref{p:NonhomogeneousBPproblem}. 

\begin{problem}[Comparison problem for the lower dimensional dual Radon transform] \label{p:cpldRt} Let $p \geq 1$ and $f,g \colon \mathcal{G}_{n-k,n} \to [0,\infty)$ be a pair of continuous functions vanishing at infinity such that $f,g \in L^p(\mathcal{G}_{n-k,n})$ and
\begin{equation}\label{e:cpldRtassumption}
\mathcal{R}_{n-k}^*f(x) \leq \mathcal{R}_{n-k}^*g(x) \quad \text{ for all } x \in \R^n.
\end{equation}
Does it necessarily follow that $\|f\|_{L^p(\mathcal{G}_{n-k,n})} \leq \|g\|_{L^p(\mathcal{G}_{n-k,n})}$?
\end{problem}

As we shall see (Proposition~\ref{prop:p=1}) the answer is always yes when $p =1$. However, at the moment $p >1$, the situation becomes more delicate and requires careful investigation. The general idea is to exploit the ideas similar to those in \cite{Gardner,GKS,Lutwak,Kold1,KRZ} by introducing a class of admissible functions, $\mathcal{A}_p^k$, and to show that members of this set of functions classify the solution to Problem~\ref{p:cpldRt} (see below for the associated definitions).  We would also like to remark that different generalizations of the Busemann-Petty problem have been considered in \cite{BZ,KoldGAFA,Milman,Milman2,RZ,Zhang}; however, it doesn't appear that Problem~\ref{p:cpldRt} has any connection to these generalzations except for $k =1$.

Denote by $M^+(\R^n)$ the class of non-negative Radon measure on $\R^n$ having finite total variation.  Denote by $C_b(\mathcal{G}_{n-k,n})$ the space of bounded continuous functions, and by $C_0(\mathcal{G}_{n-k,n})$ those continuous functions that vanish at infinity. Following Helgason, \cite[pg.35]{Helgason2}, we say that a function $g \in \mathcal{C}_c^\infty(\mathcal{G}_{n-k,n})$ satisfies property $(H)$ if, for each $j \in \mathbb{N}$, and $H \in G_{n-k,n}$, the function
$
\mathcal{P}_{H,j}(z) = \int_{H^\perp}g(H + w) (w \cdot z)^j dw, z \in H^\perp$
is the restriction of a homogeneous polynomial of degree $j$ on $\R^n$. We denote this space of functions by $C_{H}^\infty(\mathcal{G}_{n-k,n})$. It is known that (\cite[Ch.~1, Theorem~6.3]{Helgason2}) the $(n-k)$-dimensional Radon transform is a bijection from $C_c(\R^n)$ onto $C_H^\infty(\mathcal{G}_{n-k,n})$. 

\begin{definition} Let $0 < k <n$ be an integer, and let $p >1 $. We define the class of $(p,k)$-admissible functions by 
\[
\mathcal{A}_p^{k} := \left\{h \in C_0(\mathcal{G}_{n-k,n}) \colon h \geq 0 \text{ and } h^{p-1} = \mathcal{R}_{n-k} \mu \text{ for some } \mu \in \mathcal{M}^+(\R^n) \right\}. 
\]
We also have the following important subclass of $\mathcal{I}_p^k$:
\begin{align*}
\mathcal{A}_{p,\infty}^k:= &\left\{h \in C_c^\infty(\mathcal{G}_{n-k,n}) \colon h \geq 0, h^{p-1} \in C_H^\infty(\mathcal{G}_{n-k,n}), \right.\\
&\left. \text{ and } h^{p-1} = \mathcal{R}_{n-k} \varphi  \text{ for some } \varphi \in C_c^\infty(\R^n), \varphi \geq 0 \right\}.
\end{align*}
\end{definition}

We are now in a position to state our first result. 

\begin{theorem} Let $n \geq 2$, $0< k <n$ be an integer, and $p >1$.
\begin{itemize}
    \item[(a)] Suppose that $g,h \in C_0(\mathcal{G}_{n-k,n})$ are such that $\mathcal{R}_{n-k}^*g \leq \mathcal{R}_{n-k}^*h$. If $g \in \mathcal{A}_p^k$, then $\|g\|_{L^p(\mathcal{G}_{n-k,n})} \leq \|h\|_{L^p(\mathcal{G}_{n-k,n})}$. 

    \item[(b)] Let $h \geq 0$ belong to $C_H^\infty(\mathcal{G}_{n-k,n})\setminus \mathcal{A}_{p,\infty}^k$. Then there exists some non-negative function $g \in C_c^\infty(\mathcal{G}_{n-k,n})$ such that $\mathcal{R}_{n-k}^*g  \leq\mathcal{R}_{n-k}^*h $, and yet $\|g\|_{L^p(\mathcal{G}_{n-k,n})} > \|h\|_{L^p(\mathcal{G}_{n-k,n})}$. 
\end{itemize}
    
\end{theorem}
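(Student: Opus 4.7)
The plan for (a) is a short duality-plus-Hölder computation. Using the representation $g^{p-1}=\mathcal{R}_{n-k}\mu$ guaranteed by $g\in\mathcal{A}_p^k$, I would write
\[
\|g\|_{L^p(\mathcal{G}_{n-k,n})}^p \;=\; \int_{\mathcal{G}_{n-k,n}} g\cdot g^{p-1}\, d\xi \;=\; \int_{\mathcal{G}_{n-k,n}} g\cdot \mathcal{R}_{n-k}\mu\, d\xi,
\]
and then transfer the last integral to $\R^n$ via the adjointness relation $\int(\mathcal{R}_{n-k}f)\cdot\psi = \int f\cdot\mathcal{R}_{n-k}^*\psi$ (which extends from smooth functions to non-negative Radon measures by standard regularization), obtaining $\int_{\R^n}\mathcal{R}_{n-k}^*g\, d\mu$. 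The hypothesis $\mathcal{R}_{n-k}^*g\leq\mathcal{R}_{n-k}^*h$ combined with $\mu\geq 0$ then gives
\[
\|g\|_p^p \leq \int_{\R^n}\mathcal{R}_{n-k}^*h\, d\mu = \int_{\mathcal{G}_{n-k,n}} h\cdot g^{p-1}\, d\xi \leq \|h\|_{L^p}\|g\|_{L^p}^{p-1},
\]
the final step being Hölder with conjugate exponents $(p,\,p/(p-1))$. Dividing by $\|g\|_{L^p}^{p-1}$ yields the conclusion (with the trivial case $\|g\|_p=0$ handled separately).

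For (b), my plan is a first-order variational argument. I would consider $g_t := h+t\psi$ for a small parameter $t>0$ and a judiciously chosen $\psi\in C_c^\infty(\mathcal{G}_{n-k,n})$, and arrange:
\begin{itemize}
\item[(i)] $g_t\geq 0$: automatic for small $t>0$ provided the negative part of $\psi$ is contained in $\{h>0\}$;
\item[(ii)] $\mathcal{R}_{n-k}^*g_t\leq \mathcal{R}_{n-k}^*h$, i.e.\ $\mathcal{R}_{n-k}^*\psi\leq 0$;
\item[(iii)] $\|g_t\|_p > \|h\|_p$: via the Taylor expansion $\|h+t\psi\|_p^p = \|h\|_p^p + pt\int h^{p-1}\psi\, d\xi + O(t^2)$, it suffices that $\int h^{p-1}\psi\, d\xi > 0$.
\end{itemize}

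The existence of a $\psi$ meeting (ii) and (iii) is the heart of the matter, and I would argue it by contradiction. If no such $\psi$ existed, then the linear functional $\psi\mapsto\int h^{p-1}\psi\, d\xi$ would be non-positive on the convex cone $\{\psi\in C_c^\infty(\mathcal{G}_{n-k,n}) \colon \mathcal{R}_{n-k}^*\psi\leq 0\}$. A Hahn-Banach / Riesz representation argument, combined with adjointness for $\mathcal{R}_{n-k}$, would then produce a non-negative Radon measure $\mu\in\mathcal{M}^+(\R^n)$ with $h^{p-1}=\mathcal{R}_{n-k}\mu$. Since $h\in C_H^\infty$ is smooth and compactly supported and (in the regime where Helgason's range characterization applies) $h^{p-1}\in C_H^\infty$ as well, the Helgason bijection upgrades $\mu$ to an absolutely continuous measure $\varphi\, dx$ with $\varphi\in C_c^\infty(\R^n)$ and $\varphi\geq 0$. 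This would place $h\in\mathcal{A}_{p,\infty}^k$, contradicting the hypothesis.

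The main obstacle is the Hahn-Banach step: making the separation rigorous so that the separating object is a bona fide non-negative measure, and then upgrading that measure to a non-negative smooth density under the smoothness assumption. This requires care with the LF topology on $C_c^\infty(\mathcal{G}_{n-k,n})$ and invocation of Helgason's range/inversion theorem for $\mathcal{R}_{n-k}$. A secondary (easier) technicality is ensuring $g_t\geq 0$ at zeros of $h$; this can be addressed either by localizing the support of the negative part of $\psi$ into the open set $\{h>0\}$, or by preliminarily regularizing $h\mapsto h+\epsilon$ and passing to the limit $\epsilon\to 0$.
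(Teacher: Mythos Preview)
Your argument for (a) is exactly the paper's: duality against the representing measure $\mu$, the pointwise hypothesis, then H\"older.

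For (b) your variational setup $g_t=h+t\psi$ matches the paper's, but the paper finds $\psi$ constructively rather than by Hahn--Banach. It invokes Helgason's bijection (Lemma~\ref{l:additionalmapping}) at the outset: since $h^{p-1}\in C_H^\infty(\mathcal{G}_{n-k,n})$, there is already a concrete $\varphi\in C_c^\infty(\R^n)$ with $h^{p-1}=\mathcal{R}_{n-k}\varphi$, and the hypothesis $h\notin\mathcal{A}_{p,\infty}^k$ forces $\varphi<0$ on some nonempty open set $\mathcal{O}$. One then picks (via Corollary~\ref{coro:mapping}) an $f\in C_c^\infty(\mathcal{G}_{n-k,n})$ with $\eta:=\mathcal{R}_{n-k}^*f\geq 0$ and $\eta=0$ on $\operatorname{supp}(\varphi)\setminus\mathcal{O}$, so that $\int\varphi\,\eta<0$; then $g:=h-\varepsilon f$ satisfies $\mathcal{R}_{n-k}^*g\leq\mathcal{R}_{n-k}^*h$ and
\[
\|h\|_p^p=\langle\varphi,\mathcal{R}_{n-k}^*h\rangle<\langle\varphi,\mathcal{R}_{n-k}^*h-\varepsilon\eta\rangle=\langle h^{p-1},g\rangle_k\leq\|g\|_p\,\|h\|_p^{p-1}.
\]
This is precisely your conditions (ii)--(iii) with $\psi=-f$ exhibited directly from the sign of $\varphi$, rather than inferred by separation. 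Since your own argument ultimately invokes Helgason's bijection anyway (to upgrade the separating measure to a smooth nonnegative density), you may as well invoke it first and bypass the Hahn--Banach step entirely; what you flag as the ``main obstacle'' then never arises. Your indirect route would purchase something only in a regime where Helgason's range theorem is unavailable, which is not the present setting.
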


The above theorem is an immediate consequence of Theorem~\ref{t:affirmative} and Theorem~\ref{t:counterexample} below.


One of the prominent results appearing in \cite{Solmon1} is the following mixed norm estimate for the dual $(n-k)$-dimensional Radon transform: Given a measurable function $g \colon \mathcal{G}_{n-k,n} \to \R$ and $p \geq \frac{2k}{n}$, $p > 1$, the inequality
\begin{equation}\label{e:Solmon} 
\|\mathcal{R}_{n-k}^*g\|_{L^q(\R^n)} \leq c \left(\int_{G_{n-k,n}} \|g(H,\cdot)\|_{L^p(H^\perp)}^2 d\nu_{n-k}(H) \right)^{\frac{1}{2}}, \quad  q = \frac{pn}{k}
\end{equation}
holds, where $c>0$ is some constant. Similar estimates have been established by Rubin in \cite{Rubin2}. 

 As a result of our investigation, we establish several reverse forms of \eqref{e:Solmon}. 

\begin{theorem} \label{theorem:slicingaverage} Let $0 < k < n$ be an integer, $p >1$ and $w \in C_0(\mathcal{G}_{n-k,n}) \cap L^1(\mathcal{G}_{n-k,n})$. Let $g\in C_0(\mathcal{G}_{n-k,n}) \cap L^p(\mathcal{G}_{n-k,n},w)$ be such that $\beta e^{-\alpha |z|_2^2} \leq g(H,z) \leq \gamma e^{-\alpha |z|_2^2}$ holds for every $ H \in G_{n-k,n}$ and $z \in H^\perp$, where $\alpha,\beta,\gamma >0$. 
Then
\[
\|g\|_{L^p(\mathcal{G}_{n-k,n},w)} \leq \left(\frac{\gamma}{\beta}\right)^{p-1} \left\|\frac{\mathcal{R}_{n-k}^*[gw]}{\mathcal{R}_{n-k}^*w}\right\|_{L^\infty(\R^n)}\|w\|_{L^1(\mathcal{G}_{n-k,n})}^{\frac{1}{p}}.
\]
\end{theorem}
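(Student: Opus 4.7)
The plan is to exploit the fact that the reference Gaussian profile
\[
g_0(H,z) := e^{-\alpha|z|_2^2}, \qquad (H,z)\in \mathcal{G}_{n-k,n},
\]
which sandwiches $g$ via $\beta g_0 \leq g \leq \gamma g_0$, is itself $(p,k)$-admissible. Indeed, since $|y+z|_2^2 = |y|_2^2+|z|_2^2$ for $y\in H$ and $z\in H^\perp$, a direct Gaussian integration yields
\[
\mathcal{R}_{n-k}\varphi_\alpha(H,z) \;=\; e^{-\alpha(p-1)|z|_2^2} \;=\; g_0(H,z)^{p-1},
\]
where $\varphi_\alpha(x) := \bigl(\alpha(p-1)/\pi\bigr)^{(n-k)/2}\,e^{-\alpha(p-1)|x|_2^2}$ is a Schwartz density on $\R^n$. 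Hence $g_0 \in \mathcal{A}_p^k$ with representing finite non-negative Borel measure $d\mu = \varphi_\alpha\,dx$, and the admissibility machinery of the paper becomes available for $g_0$ (not for $g$).

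Starting from $\|g\|_{L^p(\mathcal{G}_{n-k,n},w)}^p = \int g\cdot g^{p-1}\cdot w$, I first apply the upper sandwich $g^{p-1}\leq \gamma^{p-1} g_0^{p-1} = \gamma^{p-1}\mathcal{R}_{n-k}\varphi_\alpha$ and invoke the standard Radon/dual-Radon duality $\int F\cdot \mathcal{R}_{n-k}\varphi_\alpha = \int \mathcal{R}_{n-k}^* F\cdot \varphi_\alpha$ with $F = gw$. Multiplying and dividing the resulting $\R^n$-integral by $\mathcal{R}_{n-k}^*w$, pulling the $L^\infty$-norm out, and then running the duality backwards produces
\[
\|g\|_{L^p(\mathcal{G}_{n-k,n},w)}^p \;\leq\; \gamma^{p-1}\,\Bigl\|\tfrac{\mathcal{R}_{n-k}^*[gw]}{\mathcal{R}_{n-k}^*w}\Bigr\|_\infty \int_{\mathcal{G}_{n-k,n}} w\cdot g_0^{p-1}.
\]
To return from $g_0^{p-1}$ to $g^{p-1}$ I now invoke the lower sandwich $g_0^{p-1}\leq \beta^{-(p-1)} g^{p-1}$, and then close the loop by Hölder's inequality in the form
\[
\int w g^{p-1} \;\leq\; \|w\|_{L^1(\mathcal{G}_{n-k,n})}^{1/p}\,\|g\|_{L^p(\mathcal{G}_{n-k,n},w)}^{p-1}.
\]
Dividing through by $\|g\|_{L^p(w)}^{p-1}$ yields exactly the advertised inequality with constant $(\gamma/\beta)^{p-1}$.

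The main technical obstacle is justifying the two passes through Radon duality, since $g,w$ are only assumed to lie in $C_0\cap L^1$ on $\mathcal{G}_{n-k,n}$ while the paired test object $\varphi_\alpha$ is Schwartz on $\R^n$. Fortunately, every integrand that appears, namely $(gw)\cdot\mathcal{R}_{n-k}\varphi_\alpha$, $\mathcal{R}_{n-k}^*[gw]\cdot\varphi_\alpha$, $w\cdot\mathcal{R}_{n-k}\varphi_\alpha$, and $\mathcal{R}_{n-k}^*w\cdot\varphi_\alpha$, is non-negative, so Fubini-Tonelli applies unconditionally; one only needs to note that $\mathcal{R}_{n-k}^*w$ is a bounded continuous function (it is dominated by $\|w\|_\infty\,\nu_{n-k}(G_{n-k,n})$), which legitimizes the $L^\infty$-norm wherever the denominator is positive. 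Apart from this Fubini bookkeeping, the argument is a short sandwich built atop the admissibility of the Gaussian $g_0$, and the multiplicative loss $(\gamma/\beta)^{p-1}$ precisely quantifies how far $g$ departs from being admissible itself.
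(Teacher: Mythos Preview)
Your argument is correct and is essentially the paper's proof with the abstraction layer removed: the paper first establishes the general slicing inequality (Theorem~\ref{t:generalslicing}) involving the distance $d_{p,w}(g,\mathcal{A}_p^k)$, then specializes to $h=1$ and bounds $d_{p,w}(g,\mathcal{A}_p^k)\leq \gamma/\beta$ via the admissible majorant $f=\gamma g_0$; you instead run the very same chain (upper sandwich $g^{p-1}\leq \gamma^{p-1}g_0^{p-1}=\gamma^{p-1}\mathcal{R}_{n-k}\varphi_\alpha$, duality, extraction of the $L^\infty$ ratio, reverse duality, lower sandwich, H\"older) directly. The Gaussian computation showing $g_0\in\mathcal{A}_p^k$ is exactly the one the paper carries out in the proof of Proposition~\ref{prop:p=1}, and your closing H\"older step $\int w g^{p-1}\leq \|w\|_{L^1}^{1/p}\|g\|_{L^p(w)}^{p-1}$ is the $h=1$ instance of the paper's final H\"older bound $\int f^{p-1}hw\leq \|f\|_{L^p(w)}^{p-1}\|h\|_{L^p(w)}$ combined with $\|f\|_{L^p(w)}\leq(\gamma/\beta)\|g\|_{L^p(w)}$. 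So the two proofs differ only in packaging, not in substance.
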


For $s \geq 0$ consider the Sobolev space
\[
W^{s}(\R^n) =\left \{\varphi \in L^2(\R^n) \colon |\cdot|_2^{s} \widehat{\varphi}(\cdot) \in L^2(\R^n)\right\};
\]
analogously, we define the Sobolev $W^{s}(\mathcal{G}_{n-k,n})$.  Consider the subspace $W_H^s(\mathcal{G}_{n-k,n})$ of all measurable functions $g \colon \mathcal{G}_{n-k,n} \to \R$ satisfying the following conditions: there is a compact convex set in $\R^n$ such that $g$ vanishes whenever it misses $K$, and for every nonnegative integer $m$, there is a homogeneous polynomial $P_m$ of degree $m$ on $\R^n$ such that the restriction of $P_m\mid_{H^\perp}$ to the $k$-plane $H^\perp$ satisfies
$
P_m\mid_{H^\perp}(z) = \int_{H^\perp}(z \cdot y)^m g(H,z) dz. 
$
For more precise definitions, see Section \ref{sec:background}.

\begin{theorem}\label{t:slicing} Let $0 < k < n$ be an integer and $s \geq 0$, and $p >1$. If $g^{p-1}$ belongs to the Sobolev space $W_H^{s + \frac{n-k}{2}}(\mathcal{G}_{n-k,n}) $ is non-negative and continuous, then the following estimate holds: 
\[
\frac{\|g\|_{L^p(\mathcal{G}_{n-k,n})}}{\|g^{p-1}\|_{L^p(\mathcal{G}_{n-k,n})}} \leq \left\|\frac{\mathcal{R}_{n-k}^* g}{\mathcal{R}_{n-k}^* g^{p-1}}\right\|_{L^\infty(\R^n)}
\]
\end{theorem}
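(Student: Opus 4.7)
The plan is to follow the duality argument of part~(a) of the earlier classification theorem: write $g^{p-1}$ as a Radon transform, then transfer the pointwise bound $\mathcal{R}_{n-k}^*g \le C\,\mathcal{R}_{n-k}^*g^{p-1}$ (with $C:=\|\mathcal{R}_{n-k}^*g/\mathcal{R}_{n-k}^*g^{p-1}\|_{L^\infty(\R^n)}$) across Radon duality. The hypothesis $g^{p-1}\in W_H^{s+\frac{n-k}{2}}(\mathcal{G}_{n-k,n})$ is precisely what makes this inversion available, since the $(n-k)$-dimensional Radon transform gains exactly $\frac{n-k}{2}$ derivatives on the Sobolev scale and, combined with Helgason's range characterization via property~$(H)$, restricts to a bijection from $W^s(\R^n)$ onto $W_H^{s+\frac{n-k}{2}}(\mathcal{G}_{n-k,n})$.

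The first step is to apply this Sobolev inversion to produce a compactly supported $\varphi\in W^s(\R^n)$ with $\mathcal{R}_{n-k}\varphi = g^{p-1}$. Then, by Fubini together with the standard duality identity between $\mathcal{R}_{n-k}$ and $\mathcal{R}_{n-k}^*$, one computes
\[
\|g\|_{L^p(\mathcal{G}_{n-k,n})}^p \,=\, \int_{\mathcal{G}_{n-k,n}} g\cdot g^{p-1} \,=\, \int_{\R^n}\mathcal{R}_{n-k}^*g\cdot\varphi \,\le\, C\int_{\R^n}\mathcal{R}_{n-k}^*g^{p-1}\cdot\varphi \,=\, C\int_{\mathcal{G}_{n-k,n}} g^{2(p-1)},
\]
where the middle inequality uses the pointwise bound together with $\varphi\ge 0$, and the final equality is Radon duality run backwards. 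To close out, I would apply H\"older with conjugate exponents $p$ and $p'=p/(p-1)$, giving $\int g^{2(p-1)} \le \|g^{p-1}\|_{L^p}\,\|g^{p-1}\|_{L^{p'}}$; the identity $(p-1)p'=p$ forces $\|g^{p-1}\|_{L^{p'}}=\|g\|_{L^p}^{p-1}$, so one ends up with $\|g\|_{L^p}^p \le C\,\|g^{p-1}\|_{L^p}\,\|g\|_{L^p}^{p-1}$, which rearranges to the stated inequality after dividing by $\|g\|_{L^p}^{p-1}$.

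The main obstacle is guaranteeing that the inverted $\varphi$ is non-negative; the positivity of $g^{p-1}$ does \emph{not} in general descend through the inverse Radon transform, and yet the middle inequality above does require $\varphi\ge 0$. I would address this either by showing that the Sobolev hypothesis, perhaps together with a little extra regularity, automatically places $g^{p-1}$ in a class compatible with the admissible class $\mathcal{A}_p^k$ (whose inverting measure is positive by construction), or, failing that, by an approximation argument: mollify $g^{p-1}$, split the resulting inverses into positive and negative parts, estimate the contribution of the negative parts in a Sobolev norm controlled by the hypothesis, and pass to the limit using the continuity of $\mathcal{R}_{n-k}^*$ on the relevant spaces.
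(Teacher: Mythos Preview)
Your duality-plus-H\"older chain is exactly the mechanism the paper uses: it invokes the general slicing inequality (Theorem~\ref{t:generalslicing}), whose proof is precisely your computation specialized to $h=g^{p-1}$ and $w=1$. The one substantive difference is emphasis. You take finiteness of $\bigl\|\mathcal{R}_{n-k}^*g \,/\, \mathcal{R}_{n-k}^*g^{p-1}\bigr\|_{L^\infty(\R^n)}$ for granted, whereas the paper devotes almost its entire argument to this point: Lemma~\ref{l:Sobmapping} is invoked only to conclude that $g^{p-1}$ (and hence $g$) lies in $C_c(\mathcal{G}_{n-k,n})$, after which Theorem~\ref{t:mappingproperty} gives $\mathcal{R}_{n-k}^*g(x),\,\mathcal{R}_{n-k}^*g^{p-1}(x)=O(|x|_2^{-k})$ as $|x|_2\to\infty$, and the shared support makes the ratio globally bounded. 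The paper does \emph{not} use the Sobolev inversion to produce a non-negative preimage.

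The positivity concern you flag is real, and it is exactly where the paper's own proof is silent. Theorem~\ref{t:generalslicing} carries a factor $d_p(g,\mathcal{A}_p^k)^{p-1}$ on the right-hand side; this collapses to $1$ only if $g\in\mathcal{A}_p^k$, i.e.\ if the inverting $\varphi$ (or measure) is non-negative, yet the paper simply drops the factor without justification when passing from \eqref{e:slicinginequality} to the stated inequality. Your proposed repairs are not convincing as written: non-negativity of $\mathcal{R}_{n-k}\varphi$ does not force $\varphi\ge 0$ (this failure is precisely what drives the counterexample construction in Theorem~\ref{t:counterexample}), and the mollify-and-split sketch offers no mechanism for making the negative part of the inverse small in any norm that controls the duality pairing. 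So you have correctly located the gap; neither your outline nor the paper's proof closes it.
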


Another interesting consequence of Theorem~\ref{t:generalslicing} is the following mean-value property for the dual $(n-k)$-dimensional Radon transform. 

\begin{theorem}\label{t:meanvalue} Let $0 < k < n$ be an integer, $p > 1$, $s \geq 0$, and $w \colon \mathcal{G}_{n-k,n} \to [0,\infty)$ be a compactly supported continuous function.  If $g^{p-1} \in W_H^{s+ \frac{n-k}{2}}(\mathcal{G}_{n-k,n})$ is non-negative and continuous, then 
\[
\frac{\|g\|_{L^p(\mathcal{G}_{n-k,n},w)}}{\|w\|_{L^1(\mathcal{G}_{n-k,n})}^{\frac{1}{p}}} \leq \left\| \frac{\mathcal{R}_{n-k}^*gw}{\mathcal{R}_{n-k}^*w}\right\|_{L^\infty(\R^n)}.
\]   
\end{theorem}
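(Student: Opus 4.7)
The plan is to derive Theorem~\ref{t:meanvalue} from Theorem~\ref{t:generalslicing} via a single application of Hölder's inequality. First I would set $M := \|\mathcal{R}_{n-k}^*[gw]/\mathcal{R}_{n-k}^* w\|_{L^\infty(\R^n)}$, so that by construction
\[
\mathcal{R}_{n-k}^*[gw](x) \;\leq\; M\,\mathcal{R}_{n-k}^* w(x) \qquad \text{for all } x\in \R^n.
\]
The heart of the argument is to promote this pointwise comparison to the scalar inequality
\[
\int_{\mathcal{G}_{n-k,n}} g^p\, w \, d\xi \;=\; \int_{\mathcal{G}_{n-k,n}} (gw)\cdot g^{p-1}\, d\xi \;\leq\; M\int_{\mathcal{G}_{n-k,n}} w\cdot g^{p-1}\, d\xi,
\]
which is exactly the content of Theorem~\ref{t:generalslicing} under the Sobolev hypothesis $g^{p-1}\in W_H^{s+(n-k)/2}(\mathcal{G}_{n-k,n})$.

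The mechanism I would invoke inside Theorem~\ref{t:generalslicing} is standard: by Helgason's range theorem together with the Sobolev inversion formula recalled in Section~\ref{sec:background}, the hypothesis on $g^{p-1}$ lets one write $g^{p-1} = \mathcal{R}_{n-k}\varphi$ for an appropriate $\varphi$ on $\R^n$; pairing the pointwise inequality against $\varphi$ and invoking the Fubini-type duality
\[
\int_{\R^n} \mathcal{R}_{n-k}^* u(x)\, \varphi(x)\,dx \;=\; c_{n,k}\int_{\mathcal{G}_{n-k,n}} u(\xi)\, \mathcal{R}_{n-k}\varphi(\xi)\, d\xi
\]
with $u=gw$ and then $u=w$ produces the desired integral bound.

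With the estimate $\int g^p\,w \leq M\int g^{p-1}\,w$ in hand, the plan is to close with Hölder's inequality applied to the factorization $g^{p-1}w = (g^{p-1}w^{(p-1)/p})\cdot w^{1/p}$ with conjugate exponents $p/(p-1)$ and $p$, which gives
\[
\int_{\mathcal{G}_{n-k,n}} g^{p-1}\, w\, d\xi \;\leq\; \|g\|_{L^p(\mathcal{G}_{n-k,n},w)}^{p-1}\, \|w\|_{L^1(\mathcal{G}_{n-k,n})}^{1/p}.
\]
Chaining this with the previous display and dividing by $\|g\|_{L^p(\mathcal{G}_{n-k,n},w)}^{p-1}$ (the vanishing case being trivial) would produce the claimed ratio bound. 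The main obstacle is the black-box step: verifying that the Sobolev order $s+(n-k)/2$ is precisely what is needed so that the duality pairing against $\varphi$ is legitimate without imposing any sign constraint on $\varphi$. Once that regularity input from the range theory of $\mathcal{R}_{n-k}$ is cited through Theorem~\ref{t:generalslicing}, the rest of the argument is purely arithmetic.
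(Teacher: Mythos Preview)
Your proposal has a genuine gap precisely at the step you flag as ``the main obstacle.'' Writing $g^{p-1}=\mathcal{R}_{n-k}\varphi$ via Lemma~\ref{l:Sobmapping} only gives $\varphi\in W_c^s(\R^n)$ with no sign information, and integrating the pointwise inequality $\mathcal{R}_{n-k}^*[gw]\le M\,\mathcal{R}_{n-k}^*w$ against $\varphi$ preserves the direction of the inequality only when $\varphi\ge 0$. The Sobolev order $s+\tfrac{n-k}{2}$ is exactly what makes the range theorem a bijection onto $W_H^{s+(n-k)/2}$; it carries no positivity content, so regularity cannot rescue this step. Your assertion that the intermediate inequality $\int g^p\,w\le M\int g^{p-1}\,w$ ``is exactly the content of Theorem~\ref{t:generalslicing}'' is also inaccurate: that theorem never pairs against the preimage of $g^{p-1}$. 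Its engine is an auxiliary $f\in\mathcal{A}_p^k$ with $f\ge g$ and $f^{p-1}=\mathcal{R}_{n-k}\mu$ for a \emph{non-negative} measure $\mu$, and the non-negativity of $\mu$ is what licenses the pairing; the price is the factor $d_{p,w}(g,\mathcal{A}_p^k)^{p-1}$.

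The paper's proof deploys the Sobolev hypothesis for a different purpose altogether. It invokes Lemma~\ref{l:Sobmapping} only to conclude that $g$, and hence $gw$, lies in $C_c(\mathcal{G}_{n-k,n})$; then Theorem~\ref{t:mappingproperty} gives $\mathcal{R}_{n-k}^*[gw](x)=O(|x|_2^{-k})$ and $\mathcal{R}_{n-k}^*w(x)=O(|x|_2^{-k})$ as $|x|_2\to\infty$, so the ratio on the right is bounded and the $L^\infty$ norm is finite --- a point your proposal does not address at all. With that in hand the paper applies Theorem~\ref{t:generalslicing} as a black box with $h\equiv 1$ (so $\|h\|_{L^p(\mathcal{G}_{n-k,n},w)}=\|w\|_{L^1(\mathcal{G}_{n-k,n})}^{1/p}$), and the H\"older step you spell out is already absorbed inside that theorem's proof. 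In short, you have inverted the logic: the Sobolev input buys compact support and finiteness of the ratio, not a sign-free duality pairing.
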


The paper is organized as follows. In Section~\ref{sec:background}, we introduce the necessary background for defining the $(n-k)$-dimensional Radon transform of a measure and establish mapping properties for the dual Radon transform (Theorem~\ref{t:mappingproperty}). In Section~\ref{sec:solution}, we provide a solution to the Problem~\ref{p:cpldRt}. Finally, in Section~\ref{sec:slicing}, we provide the proofs of Theorem~\ref{theorem:slicingaverage}, Theorem~\ref{t:slicing}, and Theorem~\ref{t:meanvalue} as a byproduct of a more general result, Theorem~\ref{t:generalslicing}.





    

\section{The $(n-k)$-dimensional Radon transform of a measure: definition and mapping properties} \label{sec:background}

In this section, we outline sufficient background material to define the $(n-k)$-dimensional Radon transform of a measure. To do this, we extend the construction in \cite[Section~2]{BL} to the case where $k >1$.

We will work in the $n$-dimensional Euclidean space $\R^n$ equipped with its usual inner product structure $x \cdot y$ and induced normed $|x|_2 = \sqrt{x \cdot x}$. We denote the Lebesgue measure of a measurable subset $A$ of $\R^n$ of appropriate dimension by $\vol_m(A)$. The $n$-dimensional Euclidean unit ball shall be denoted by $B_2^n$, and its boundary, the unit sphere, by $S^{n-1}$.


Given a measure metric space $(X,d,\mu)$ and $p \neq 0$, we say that a real-valued function $h\colon X \to \R$ belongs to $L^p(X)$ if 
\[
\int_X |h(x)|^p d\mu(x) <\infty.
\]
We define the $L^p(X)$-norm of a function $h \colon X \to \R$ to be 
\[
\|h\|_{L^p(X)} = \left(\int_{X}|h(x)|^pd\mu(x) \right)^{\frac{1}{p}}.
\]

The Fourier transform of $f \in L^1(\R^n)$ is given by 
\[
\widehat{f}(x) = \int_{\R^n} f(y) e^{-i x \cdot y} dy.
\]

Define $C_c^{\infty}(\R^n)$ to be the space of compactly supported real-valued functions endowed with the supremum norm, and let  $C_0(\R^n)$ be its closure in the supremum norm, that is, the space of continuous real-valued functions on $\R^n$ vanishing at infinity.  

Recall that a Radon measure on $\R^n$ is a regular Borel measure that is finite on all compact sets. A signed Radon measure is a Borel measure whose positive and negative variations are themselves Radon measures. According to the Riesz representation theorem, \cite[Theorem~7.17]{Folland}, the dual space of $C_0(\R^n)$ is the space of signed Radon measures, $M(\R^n)$, endowed with the total variation norm, i.e. given $\mu \in M(\R^n)$, $\|\mu\|_{(C_0(\R^n))^*} = |\mu|(\R^n)$, having finite total variation. The action of a member $\mu \in M(\R^n)$ on a test function $\varphi \in C_0(\R^n)$ will be denoted by the pairing $\langle \mu, \varphi \rangle$.   In fact, even more can be gleaned from the Riesz representation theorem; it guarantees that to any continuous linear functional $\Phi$ acting on $C_0(\R^n)$ there corresponds a unique signed Radon measure $\mu$ such that $\langle \Phi, \varphi \rangle =\int_{\R^n} \varphi d\mu$ for all $\varphi \in C_0(\R^n)$. Conversely, if the set function $\mu(A)$ is given, then the previous integral defines a continuous linear functional on $C_0(\R^n)$.  Furthermore, we remark that the space $L^1(\R^n)$ is a subspace of $M(\R^n)$ via the identification $\varphi \mapsto \int f(x) \varphi(x) dx$ for $f \in L^1(\R^n)$. 

Finally, we note that any member $\mu \in M(\R^n)$ can be unique extended as a linear functional to the space of continuous and bounded functions defined on $\R^n$, which we denote by $C_b(\R^n)$, as follows: take a compactly supported continuous function $\chi$ that is equal to $1$ near the origin, and then we define 
\begin{equation} \label{e:functionalextension}
\langle \mu, \varphi \rangle := \lim_{r \to \infty}\left\langle \mu, \chi\left(\frac{\cdot}{r} \right)\varphi(\cdot) \right\rangle, \quad \varphi \in C_b(\R^n).
\end{equation}

Extending $\mu \in M(\R^n)$ to $C_b(\R^n)$ in the sense of \eqref{e:functionalextension}, we can define its Fourier transform in the following way
\begin{equation}\label{e:FourierMeasure}
\widehat{\mu}(\xi) :=  \langle \mu, \xi \mapsto e^{-i x \cdot \xi} \rangle = \int_{\R^n} e^{-i x \cdot \xi} d\mu(x), \quad \xi \in \R^n. 
\end{equation}
It is also apparent that $\widehat{\mu}$ is a bounded and continuous function, and that $\|\widehat{\mu}\|_{\infty} \leq |\mu|(\R^n)$. 
For more details see \cite[Chapter~8]{Folland}. 

Let $1 \leq k <n$ be  an integer. We denote by $\mathcal{G}_{n-k,n}$ the Grassmannian manifold of $(n-k)$-dimensional affine subspace of $\R^n$ and by $G_{n-k,n}$ the usual Grassmannian manifolds of $(n-k)$-dimensional linear subspaces of $\R^n$ endowed with Haar measure $\nu_{n-k}$, and let $|G_{n-k,n}| = \nu_{n-k,n}(G_{n-k,n})$ be its volume.  
For $p > 0$, given a function $w \colon \mathcal{G}_{n-k,n} \to [0,\infty)$, we define the space $L^p(\mathcal{G}_{n-k,n},w)$ to be the space of (equivalence classes of) functions $g \colon \mathcal{G}_{n-k,n} \to \R$ such that 
\[
 \int_{G_{n-k,n}}\int_{H^\perp} |g(H,z)|^p w(H,z) dz \nu_{n-k}(H)\ < \infty
\]
with the associated norm
\[
\|g\|_{L^p(\mathcal{G}_{n-k,n},w)} =\left( \int_{G_{n-k,n}}\int_{H^\perp} |g(H,z)|^p w(H,z) dz \nu_{n-k}(H)\right)^{\frac{1}{p}}
\]
for $g \colon \mathcal{G}_{n-k,n} \to \R$. When $w \equiv 1$, we simply write $L^p(\mathcal{G}_{n-k,n})$ in place of $L^p(\mathcal{G}_{n-k,n},1)$.

 The $(n-k)$-dimensional Radon transform of a function $f \in L^1(\R^n)$ is defined almost everywhere by
\[
\mathcal{R}_{n-k}f(H,z) = \int_{H}f(y+z) dy, 
\]
where $(H,z) \in  \mathcal{G}_{n-k,n}$ and $dz$ is the Lebesgue measure on the space $H$.  In the case where $k=1$, we recover a scalar multiple of a classical Radon transform defined in the introduction.  According to \cite[Corollary~3.25]{Markoe}
\[
\|\mathcal{R}_{n-k} f\|_{L^1(\mathcal{G}_{n-k,n})} \leq |G_{n-k,n}| \|f\|_{L^1(\R^n)}, \quad \text{ for all } f \in L^1(\R^n), 
\]
which implies that $\mathcal{R}_{n-k}f \in L^1(\mathcal{G}_{n-k})$ whenever $f \in L^1(\R^n)$. 

We define the Fourier transform on $\mathcal{G}_{n-k,n}$ as follows. If $(H,z) \in \mathcal{G}_{n-k,n}$ and $g$ is in $L^1(\mathcal{G}_{n-k,n})$ function in the second variable, then 
\[
\mathcal{F}_kg(H,y) = \int_{H^{\perp}}e^{-i  y\cdot z } g(H,z) dz.
\]
Given a function $f \colon \R^n  \to \R$ and a set $A \subset \R^n$, we denote by $f \mid_A$ the restriction of $f$ to the set $A$.  The Fourier-Slice Theorem \cite[Theorem~3.27]{Markoe} connects the $(n-k)$-dimensional Radon transform to the Fourier transform in the sense that if $f \in L^1(\R^n)$ and $0 < k < n$, then, for each $H \in G_{n-k,n}$,
\begin{equation}\label{e:FSThm}
\mathcal{F}_{k}\mathcal{R}_{n-k} f(H,z) = \widehat{f}\mid_{H^\perp}(z), \quad \text{for } z \in H^{\perp}. 
\end{equation}

An immediate consequence of \eqref{e:FSThm} is that the $(n-k)$-dimensional Radon transform is an injection from $L^1(\R^n)$ into $L^1(\mathcal{G}_{n-k,n})$. In particular, if $f,h \in L^1(\R^n)$ have the same $(n-k)$-dimensional Radon transform, then $f=h$.

Let $C_0(\mathcal{G}_{n-k,n})$ be the space of real-valued continuous functions defined on $\mathcal{G}_{n-k,n}$ vanishing at infinity, endowed with the supremum norm. Given $g \in L^1(\mathcal{G}_{n-k,n})$ and $\psi \in C_0(\mathcal{G}_{n-k,n})$, we consider the pairing 
\[
\langle g, \psi \rangle_k := \int_{G_{n-k,n}} \int_{H^\perp}g(H,z) \psi(H,z) dz d\nu_{n-k}(H). 
\]
If $\Psi \in (C_0(\mathcal{G}_{n-k,n}))^*$, the dual space of $C_0(\mathcal{G}_{n-k,n})$, then we write $\langle \Psi, \psi \rangle_k$ to denote the action of $\Psi$ on $\psi \in C_0(\mathcal{G}_{n-k,n})$. In this way, $L^1(\mathcal{G}_{n-k,n})$ is identified with the subspace of $(C_0(\mathcal{G}_{n-k,n}))^*$ given by 
\[
\psi \in C_0(\mathcal{G}_{n-k,n}) \mapsto \int_{G_{n-k,n}}\int_{H^\perp} g(H,z) \psi(H,z)dz d\nu_{n-k}(H), \quad g \in L^1(\mathcal{G}_{n-k,n}).
\]

The dual $(n-k)$-dimensional Radon transform of a function $\psi \in C_0(\mathcal{G}_{n-k,n})$ is defined as 
\[
\mathcal{R}^*_{n-k}\psi(x) = \int_{G_{n-k,n}}\psi\left(H,P_{H^\perp}(x)\right)d\nu_{n-k}(H).
\]  

Denote by $C_b(\mathcal{G}_{n-k,n})$ the space of real-valued bounded and continuous functions on $ \mathcal{G}_{n-k,n}$. It is clear that $\mathcal{R}_{n-k}^*$ maps the spaces $ C_0(\mathcal{G}_{n-k,n})$ and $ C_b(\mathcal{G}_{n-k,n})$ into $ C_b(\R^n)$; however, as the next result shows, much more can be said. Let  $C_c(\mathcal{G}_{n-k,n}) \subset  C_0(\mathcal{G}_{n-k,n})$ the dense subspace of compactly supported functions. 

\begin{theorem}\label{t:mappingproperty} Let $0 < k <n$ be an integer. The following hold: 
\begin{itemize}
    \item[(a)]  Suppose that $\psi \in C_c(\mathcal{G}_{n-k,n})$. Then $R_{n-k}^*\psi(x) = O(|x|_2^{-k})$ as $|x|_2 \to \infty$. 
    \item[(b)] The dual Radon transform $R_{n-k}^*$ maps $C_0(\mathcal{G}_{n-k,n})$ continuously into $C_0(\R^n)$ and satisfies the estimate
    \[
    \sup |\mathcal{R}_{n-k}^* \psi| \leq \sup |\psi|, \quad \text{ for all } \psi \in C_0(G_{n-k,n}). 
    \]
\end{itemize}
\end{theorem}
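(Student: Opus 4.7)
\textbf{Proof plan for Theorem~\ref{t:mappingproperty}.}

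The plan is to establish (a) first via a standard projection-measure estimate on the Grassmannian, then deduce (b) by combining (a) with the boundedness of $\mathcal{R}_{n-k}^*$ in the sup-norm and the density of $C_c$ in $C_0$.

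For part (a), fix $\psi \in C_c(\mathcal{G}_{n-k,n})$ and choose $R > 0$ such that $\psi(H,z)$ vanishes whenever $|z|_2 > R$. Then $\psi(H, P_{H^\perp}(x)) = 0$ unless $|P_{H^\perp}(x)|_2 \leq R$, whence
\[
|\mathcal{R}_{n-k}^*\psi(x)| \leq \|\psi\|_\infty \, \nu_{n-k}\bigl(\{H \in G_{n-k,n} : |P_{H^\perp}(x)|_2 \leq R\}\bigr).
\]
By rotational invariance of $\nu_{n-k}$ and the linearity of $P_{H^\perp}$, this set is congruent to $\{H : |P_{H^\perp}(\theta)|_2 \leq R/|x|_2\}$ with $\theta = x/|x|_2$. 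The central ingredient is the classical fact that if $H$ is Haar-distributed on $G_{n-k,n}$ (equivalently $H^\perp$ is Haar-distributed on $G_{k,n}$), then for any unit vector $\theta$ the random variable $|P_{H^\perp}(\theta)|_2^2$ has a $\mathrm{Beta}(k/2,(n-k)/2)$ distribution; consequently $\nu_{n-k}(\{H : |P_{H^\perp}(\theta)|_2 \leq \e\}) \leq C_{n,k}\, \e^{k}$ for all small $\e > 0$. Applied with $\e = R/|x|_2$, this yields $|\mathcal{R}_{n-k}^*\psi(x)| \leq C_{n,k} R^{k} \|\psi\|_\infty |x|_2^{-k}$ for $|x|_2$ sufficiently large, which is the asserted $O(|x|_2^{-k})$ decay.

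For part (b), the sup estimate is immediate from the definition, using the finite total mass of $\nu_{n-k}$:
\[
|\mathcal{R}_{n-k}^*\psi(x)| \leq \int_{G_{n-k,n}} |\psi(H, P_{H^\perp}(x))|\, d\nu_{n-k}(H) \leq \|\psi\|_\infty.
\]
Continuity of $\mathcal{R}_{n-k}^*\psi$ follows from continuity of $x \mapsto P_{H^\perp}(x)$ together with dominated convergence, the integrand being majorized by $\|\psi\|_\infty$. To show the image lies in $C_0(\R^n)$, I pick a sequence $\psi_j \in C_c(\mathcal{G}_{n-k,n})$ with $\|\psi - \psi_j\|_\infty \to 0$. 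By part (a) each $\mathcal{R}_{n-k}^*\psi_j$ belongs to $C_0(\R^n)$, and by the sup estimate $\mathcal{R}_{n-k}^*\psi_j \to \mathcal{R}_{n-k}^*\psi$ uniformly on $\R^n$; hence $\mathcal{R}_{n-k}^*\psi \in C_0(\R^n)$ as a uniform limit of elements of $C_0$.

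The main obstacle is the quantitative projection estimate, namely that the $\nu_{n-k}$-measure of the set of $(n-k)$-planes whose orthogonal $k$-plane projects the unit vector $\theta$ to length at most $\e$ is $O(\e^{k})$. This is standard and can be verified either by invoking the Beta-distribution of squared projection lengths on the Grassmannian via Stiefel-matrix parametrization, or by an integral-geometric polar decomposition of $\theta$ relative to $H^\perp$; the remainder of the argument reduces to routine approximation and dominated-convergence manipulations.
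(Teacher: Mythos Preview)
Your proof is correct and follows essentially the same skeleton as the paper's for part~(a): both reduce to estimating $\nu_{n-k}\bigl(\{H:|P_{H^\perp}(x)|_2\le R\}\bigr)$ and show this is $O(|x|_2^{-k})$. The paper carries out this estimate by hand, passing to an integral over $S^{n-1}$ via rotation invariance and then evaluating in bi-spherical coordinates, whereas you invoke the $\mathrm{Beta}(k/2,(n-k)/2)$ law for $|P_{H^\perp}(\theta)|_2^2$; these are equivalent computations, though the paper's explicit calculation also produces a matching \emph{lower} bound $\nu_{n-k}(\{\cdots\})\ge m(n,k)R^k|x|_2^{-k}$, which it records as a separate result and which your Beta-distribution argument could easily reproduce but does not state.

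For part~(b) the routes genuinely diverge. The paper argues directly: given $\psi\in C_0$ and $\varepsilon>0$, it picks $r$ so that $|\psi(H,z)|<\varepsilon$ when $|z|_2>r$, then splits the Grassmannian integral over $\{|P_{H^\perp}(x)|_2\le r\}$ and its complement, bounding the first piece by $\varepsilon\sup|\psi|$ via the measure estimate and the second by $\varepsilon\,|G_{n-k,n}|$. Your density argument---approximate by $\psi_j\in C_c$, apply~(a), and pass to the uniform limit via the sup-norm contraction---is cleaner and more functorial, and avoids the explicit $\varepsilon$-splitting. Both are entirely standard; your version has the mild advantage of separating the analytic input~(a) from the topological closure step, while the paper's version is self-contained and does not appeal to the density of $C_c$ in $C_0$ on the affine Grassmannian.
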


Before proceeding to the proof, we need to introduce bi-spherical coordinates following very closely \cite{Markoe}. The bi-spherical coordinate system is defined by the map $\Psi: S^{n-k-1} \oplus S^{k-1} \times \left[0, \frac{\pi}{2} \right] \to S^{n-1}$ via 
$
\Psi(\theta,\omega,\beta) = \cos(\beta) \theta + \sin(\beta) \omega. 
$
Here $\oplus$ denotes the direct sum. 
We have the change of variables formula: Given a function $f \colon S^{n-1} \to \R$, 
\begin{equation}\label{e:bisphereicalcoordinates}
\begin{split}
&\int_{S^{n-1}}f(\xi) d \xi  = \int_{S^{n-k-1} \oplus S^{k-1} \times \left[0, \frac{\pi}{2} \right]} f(\cos(\beta)\theta + \sin(\beta)\omega)\\ 
&\times \sin^{k-1}(\beta)\cos^{n-k-1}(\beta)d\theta d\omega d\beta. 
\end{split}
\end{equation}

\begin{proof}[Proof of Theorem~\ref{t:mappingproperty}]  Given $r > 0$ and $x \in \R^n \setminus \{0\}$ such that $|x|_2 \geq r$, consider the set $\{H \in G_{n-k,n} \colon |P_{H^\perp}(x)|_2 \leq r \}$. Then, using rotation invariance of the Haar measure, we can write  
\begin{align*}
\int_{\left\{H \in G_{n-k,n} \colon \left|P_{H^\perp}(x)\right|_2 \leq r \right\}} d\nu_{n-k}(H) &= \int_{SO(n)} \chi_{\{A \in SO(n) \colon d(x,A\R^k) \leq r\}}(A)dA\\
&= \int_{SO(n)}\chi_{\{A \in SO(n) \colon d(A^{-1}x,\R^k) \leq r\}}(A)dA\\
&=\frac{1}{|S^{n-1}|}\int_{S^{n-1}}\chi_{\{\theta\in S^{n-1} \colon d(|x|_2\theta ,\R^k) \leq r\}}(\theta)d\theta\\
&=\frac{1}{|S^{n-1}|}\int_{S^{n-1}}\chi_{\left\{\theta\in S^{n-1} \colon \left|P_{\R^k}(|x|_2\theta)\right|_2 \leq \frac{r}{|x|_2}\right\}}(\theta)d\theta.
\end{align*} 
Set 
\[
\theta = \cos(\omega) a + \sin(\omega) b, \quad a \in S^{n-k-1} \subset \R^{n-k-1}, b \in S^{k-1} \subset \R^k, \omega \in \left[0, \frac{\pi}{2} \right].
\]
Since $a$ is orthogonal to $\R^{k}$,
$P_{\R^k}(\theta)=\sin(\omega)b.$
Moreover, as $b$ is a unit vector, for $0 \leq \omega \leq \frac{\pi}{2}$, we see that 
\[
|P_{\R^k}(\theta)|_2=|\sin(\omega)b|_2 = |b \sin(\omega)|_2 = \sin(\omega). 
\]
Applying bi-spherical coordinates \eqref{e:bisphereicalcoordinates} we obtain
\begin{align*}
&\int_{\left\{H \in G_{n-k,n} \colon \left|P_{H^\perp}(x)\right|_2 \leq r \right\}} d\nu_{n-k}(H) =\frac{1}{|S^{n-1}|}\int_{S^{n-1}}\chi_{\left\{\theta\in S^{n-1} \colon \left|P_{\R^k}(|x|_2\theta)\right|_2 \leq \frac{r}{|x|_2}\right\}}(\theta)d\theta \\
&= \frac{1}{|S^{n-1}|}\int_{S^{k-1}}\int_{S^{n-k-1}}\int_0^{\frac{\pi}{2}} \chi_{\left\{\cos(\omega)a + \sin(\omega) b \colon \sin(\omega) \leq \frac{r}{|x|_2}\right\}}(\cos(\omega)a + \sin(\omega) b)\\
&\times \sin^{k-1}(\omega)\cos^{n-k-1}(\omega)d\omega dadb\\
&=\frac{|S^{k-1}||S^{n-k-1}|}{|S^{n-1}|} \int_0^{\sin^{-1}(r|x|_2^{-1})}\sin(\omega)^{k-1}\cos^{n-k-1}(\omega)d\omega\\
&\leq \left(\frac{\pi}{2}\right)^k\frac{|S^{k-1}||S^{n-k-1}|}{|S^{n-1}|} r^k |x|_2^{-k} ,
\end{align*}
where we have employed the inequalities 
\[
 \sin(\omega) \leq \omega, \omega \in \left[0,\frac{\pi}{2}\right] \text{ and } \sin^{-1}(t) \leq \frac{\pi t}{2}, t \in [0,1].
\]
The previous calculations show that 
\[\int_{\left\{H \in G_{n-k,n} \colon \left|P_{H^\perp}(x)\right|_2 \leq r \right\}} d\nu_{n-k}(H) \leq \left(\frac{\pi}{2}\right)^k\frac{|S^{k-1}||S^{n-k-1}|}{|S^{n-1}|} r^k |x|_2^{-k}.
\] 
Next, we use the estimates 
\[
cos(\omega) \geq 1- \frac{\omega}{\sin^{-1}(t)}, t \in (0,1], \sin(\\\omega) \geq \frac{\omega}{2}, \omega \in \left[0,\frac{\pi}{2}\right], \text{ and } \sin^{-1}(t) \geq t, t \in [0,1]
\]
to get a lower bound: 
\begin{align*}
&\int_{\left\{H \in G_{n-k,n} \colon \left|P_{H^\perp}(x)\right|_2 \leq r \right\}} d\nu_{n-k}(H)\\
&\geq \frac{|S^{k-1}||S^{n-k-1}|}{2^{k-1}|S^{n-1}|} \int_0^{\sin^{-1}(r|x|_2^{-1})}\omega^k\left(1- \frac{\omega}{\sin^{-1}(r|x|_2^{-1})} \right)^{n-k-1} d\omega \\
&= \frac{|S^{k-1}||S^{n-k-1}|}{2^{k-1}|S^{n-1}|} \frac{\Gamma(k)\Gamma(n-k)}{\Gamma(n)} \sin^{-1}(r|x|_2^{-1})^k\\
&\geq \frac{|S^{k-1}||S^{n-k-1}|}{2^{k-1}|S^{n-1}|} \frac{\Gamma(k)\Gamma(n-k)}{\Gamma(n)} r^k |x|_2^{-k},
\end{align*}
where $\Gamma$ denotes the gamma function. 

Thus, we have shown that, given any $r > 0$ and $x \in \R^n \setminus \{0\}$, with $|x|_2 \geq r$, there are constant $m(n,k), M(n,k)>0$, depending only on $n$ and $k$ such that 
 \begin{equation}\label{e:caps}
m(n,k) r^k |x|_2^{-k} \leq \int_{\left\{H \in G_{n-k,n} \colon \left|P_{H^\perp}(x)\right|_2 \leq r \right\}} d\nu_{n-k}(H) \leq M(n,k) r^k|x|_2^{-k}. 
 \end{equation}


Let $\psi \in C_c(\mathcal{G}_{n-k,n})$. If $\psi$ is supported on a compact set $S \subset \mathcal{G}_{n-k,n}$, then there must be an $r_0 > 0$ sufficiently large so that $S \subset G_{n-k,n} \times r_0B_2^k$. Then, according to \eqref{e:caps}, whenever $r > r_0$ and $|x|_2 \geq r$,
\begin{align*}
|\mathcal{R}_{n-k}^*\psi(x)| &\leq  \int_{S}\left|\psi\left(H,P_{H^\perp}(x)\right) \right|d\nu_{n-k,n}(H)\\
&\leq \sup |\psi| \int_{\left\{H \in G_{n-k,n} \colon \left|P_{H^\perp}(x)\right|_2 \leq r \right\}} d\nu_{n-k}(H)\\
&\leq M(n,k) \sup |\psi| r^k|x|_2^{-k},
\end{align*}
which completes the proof of (a). 

Finally, for part (b), suppose that $\psi \in C_0(\mathcal{G}_{n-k,n})$ let $\e >0$, and choose $r >0$ sufficiently large so that $|\psi(H,z)| < \e$ whenever $|z|_2 > r$ for $z \in H^\perp$ for each $H \in G_{n-k,n}$.  By \eqref{e:caps}, we may additionally choose $x \in \R^n$ such that $|x|_2$ is large enough to guarantee that $\nu_{n-k}\left(\left\{H \in G_{n-k,n} \colon \left|P_{H^\perp}(x)\right|_2 \leq r \right\}\right) < \e$. Then, for all such $x \in \R^n$, we have that 
\begin{align*}
|R_{n-k}^*\psi(x)| &\leq \int_{\left\{H \in G_{n-k,n} \colon \left|P_{H^\perp}(x)\right|_2 \leq r \right\}} \left|\psi\left(H,P_{H^\perp}(x)\right)\right|d\nu_{n-k}(H)\\
&+ \int_{\left\{H \in G_{n-k,n} \colon \left|P_{H^\perp}(x)\right|_2 > r \right\}} \left|\psi\left(H,P_{H^\perp}(x)\right)\right|d\nu_{n-k}(H)\\
\leq \e (\sup|\psi| + |G_{n-k,n}|),
\end{align*}
as required.  The final estimate is obvious. 
\end{proof}

The inequality \eqref{e:caps} may be of independent interest, so we isolate it in the next theorem. 

\begin{theorem} Let $r > 0$ and $ x \in \R^n \setminus \{0\}$ be such that $|x|_2 \geq r$. Then, there exist constants $m(n,k),M(n,k) > 0$, depending only on $n$ and $k$ such that 
\[
m(n,k) r^k |x|_2^{-k} \leq \nu_{n-k}\left( \left\{H \in G_{n-k,n} \colon |P_{H^\perp}(x)|_2 \leq r\right\} \right) \leq M(n,k) r^k |x|_2^{-k}. 
\]
    
\end{theorem}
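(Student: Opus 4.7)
The plan is to reduce the Haar measure of $\{H : |P_{H^\perp}(x)|_2 \leq r\}$ to a one-dimensional trigonometric integral via rotation invariance and bi-spherical coordinates, and then to bound that integral from above and below by elementary estimates. Note that the quantity is homogeneous of degree $0$ in the pair $(r,|x|_2)$ up to the factor we want to extract, so everything only depends on the ratio $r/|x|_2 \in (0,1]$.

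First, I would exploit the rotation invariance of $\nu_{n-k}$: realizing $G_{n-k,n}$ as the homogeneous space $SO(n)/(SO(n-k)\times SO(k))$, one writes
\[
\nu_{n-k}\bigl(\{H : |P_{H^\perp}(x)|_2 \leq r\}\bigr) = \int_{SO(n)} \chi_{\{d(x,A\R^{n-k}) \leq r\}}(A)\, dA,
\]
and substituting $A \mapsto A^{-1}$ moves $x$ inside, after which the integrand depends only on the direction of $x$. This collapses the problem to an integral over $S^{n-1}$ of the indicator $\chi_{\{|P_{\R^k}(|x|_2 \theta)|_2 \leq r\}}$, normalized by $|S^{n-1}|^{-1}$.

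Second, I would parametrize $\theta \in S^{n-1}$ in bi-spherical coordinates $\theta = \cos(\omega)\,a + \sin(\omega)\,b$ with $a\in S^{n-k-1}\subset\R^{n-k}$, $b\in S^{k-1}\subset\R^k$, $\omega\in[0,\pi/2]$, and apply the change-of-variables formula \eqref{e:bisphereicalcoordinates}. Because $\R^k$ and $\R^{n-k}$ are orthogonally complementary, $P_{\R^k}(\theta) = \sin(\omega)\,b$, so the defining inequality is simply $\sin(\omega) \leq r/|x|_2$. Performing the trivial integrations over $a$ and $b$ reduces the quantity to
\[
\frac{|S^{k-1}||S^{n-k-1}|}{|S^{n-1}|}\int_0^{\sin^{-1}(r/|x|_2)}\sin^{k-1}(\omega)\cos^{n-k-1}(\omega)\,d\omega.
\]

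Third, I would extract the matching bounds. For the upper bound, the elementary estimates $\sin(\omega)\leq\omega$, $\cos(\omega)\leq 1$, and $\sin^{-1}(t)\leq \pi t/2$ for $t\in[0,1]$ immediately yield an $O(r^k |x|_2^{-k})$ upper bound. The lower bound is the delicate step, since $\sin^{-1}(r/|x|_2)$ may be close to $\pi/2$ when $r$ is close to $|x|_2$, so the factor $\cos^{n-k-1}(\omega)$ cannot simply be discarded. My fix is to combine $\sin(\omega)\geq \omega/2$ on $[0,\pi/2]$ with the linear underestimate $\cos(\omega) \geq 1 - \omega/\sin^{-1}(r/|x|_2)$ on the interval of integration; the substitution $\omega = \sin^{-1}(r/|x|_2)\,u$ then turns the integral into $\sin^{-1}(r/|x|_2)^k\, B(k,n-k)$, where $B$ is the Beta function, and the estimate $\sin^{-1}(t)\geq t$ finishes the job. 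The main obstacle is precisely this linear-underestimate trick for $\cos$ on a potentially long interval, which is what keeps the lower bound uniform in $r/|x|_2$.
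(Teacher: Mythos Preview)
Your proposal is correct and follows essentially the same route as the paper: the same reduction via rotation invariance to a spherical integral, the same bi-spherical coordinate computation yielding $\frac{|S^{k-1}||S^{n-k-1}|}{|S^{n-1}|}\int_0^{\sin^{-1}(r/|x|_2)}\sin^{k-1}(\omega)\cos^{n-k-1}(\omega)\,d\omega$, and the same elementary trigonometric bounds (including the linear underestimate $\cos(\omega)\geq 1-\omega/\sin^{-1}(r/|x|_2)$ and the resulting Beta integral) for the lower estimate.
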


Consider the subset $C_k^\infty(\R^n)$ of $C^\infty(\R^n)$ defined by
\[
C_k^\infty(\R^n) := \{\varphi \in C^\infty(\R^n) \colon \varphi(x) = O(|x|_2^{-k}) \text{ as } |x|_2 \to \infty\}. 
\]

 Gonzalez, \cite[Theorem~5.1]{Gonzalez}, showed that $R_{n-k}^*$ continuously maps $C^{\infty}(\R^n)$ onto the space  $C^\infty(\mathcal{G}_{n-k,n})$. The next theorem is an analogue of \cite[Theorem~1.4(c)]{Hertle} and \cite[Remark~1.6]{Hertle} and may be of independent interest. To our knowledge, it has not appeared in the literature except when $k=1$.  

\begin{theorem}\label{t:mappingpropertiesinfinitelysmooth} Let $0 < k <n$ be an integer. The following hold: 
\begin{itemize}
    \item[(a)] For any $\psi \in C_c^\infty(\mathcal{G}_{n-k,n})$ it holds that $R_{n-k}^*\psi(x) = O(|x|_2^{-k})$ as $|x|_2 \to \infty$. In other words, the $(n-k)$-dimensional dual Radon transform $R_{n-k}^*$ maps $C_c^\infty(\mathcal{G}_{n-k,n})$ continuously into $C_k^\infty(\R^n)$.
    \item[(b)] The dual Radon transform $R_{n-k}^*$ maps $C_0^\infty(\mathcal{G}_{n-k,n})$ continuously into $C_0^\infty(\R^n)$ and satisfies the estimate
    \[
    \sup |\mathcal{R}_{n-k}^* \psi| \leq \sup |\psi|, \quad \text{ for all } \psi \in C_0(G_{n-k,n}). 
    \]
\end{itemize}
    
\end{theorem}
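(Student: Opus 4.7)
The plan is to derive both parts as $C^\infty$-refinements of Theorem~\ref{t:mappingproperty}. The claimed decay of order $|x|_2^{-k}$ in (a), the vanishing at infinity in (b), and the sup estimate in (b) are already contained in that earlier theorem (applied to $\psi$ itself, viewed as a mere continuous function). What remains is to verify that $\mathcal{R}_{n-k}^*\psi$ inherits smoothness from $\psi$, together with continuity of the corresponding maps.

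For the smoothness step I would differentiate under the integral sign. Write, for each $H \in G_{n-k,n}$, $\psi_H(x) := \psi(H, P_{H^\perp}(x))$, the composition of the linear projection $P_{H^\perp}\colon \R^n \to H^\perp$ with $\psi(H,\cdot) \in C^\infty(H^\perp)$; this is smooth on $\R^n$. A chain-rule calculation yields, for every multi-index $\alpha$,
\[
|\partial^\alpha \psi_H(x)| \leq C_\alpha \sup_{(H,z)\in \mathcal{G}_{n-k,n}} |D_2^{|\alpha|}\psi(H,z)|,
\]
using only that each entry of the projection $P_{H^\perp}$ has absolute value at most $1$. Because $\nu_{n-k}$ is finite on the compact base $G_{n-k,n}$ and the right-hand side is uniformly finite --- by compact support in (a), or by reading $C_0^\infty$ as $C^\infty$ functions with all derivatives in $C_0$ in (b) --- dominated convergence allows me to commute $\partial^\alpha$ with the integral defining $\mathcal{R}_{n-k}^*\psi$.

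To finish, I would observe that each $\partial^\alpha \mathcal{R}_{n-k}^*\psi$ is again a dual Radon transform: $\partial^\alpha \mathcal{R}_{n-k}^*\psi = \mathcal{R}_{n-k}^*\phi_\alpha$, where $\phi_\alpha(H,z)$ is obtained by contracting $D_2^{|\alpha|}\psi(H,z)$ against the fixed vectors $P_{H^\perp}(e_{i_1}), \ldots, P_{H^\perp}(e_{i_{|\alpha|}})$. Each $\phi_\alpha$ inherits the support hypothesis on $\psi$ (compactly supported in (a); in $C_0$ in (b)), so Theorem~\ref{t:mappingproperty} applied to $\phi_\alpha$ yields the $O(|x|_2^{-k})$ decay in (a) --- which, specialized to $\alpha = 0$, is the $C_k^\infty(\R^n)$ membership --- and the vanishing at infinity of $\mathcal{R}_{n-k}^*\psi$ together with every partial derivative in (b), giving the $C_0^\infty(\R^n)$ conclusion. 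Operator continuity drops out of the same pointwise estimates: every Fr\'echet semi-norm on the source is controlled by a supremum of finitely many derivatives of $\psi$, and these control the corresponding semi-norm on the target via Theorem~\ref{t:mappingproperty}.

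The main technical obstacle is the uniform derivative bound needed to pull $\partial^\alpha$ inside the integral: in (a) this is immediate from compact support, whereas in (b) one must read $C_0^\infty(\mathcal{G}_{n-k,n})$ in the strong sense --- smooth functions \emph{all of whose derivatives} vanish at infinity --- for the argument to close. With that convention in place, the remainder is a transparent iteration of Theorem~\ref{t:mappingproperty}.
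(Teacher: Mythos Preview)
Your argument is correct and follows the same overall scheme as the paper: both treat the smoothness and the decay/vanishing separately, and both invoke Theorem~\ref{t:mappingproperty} for the latter. The one genuine difference is in the smoothness step. The paper simply quotes Gonzalez's result \cite[Theorem~5.1]{Gonzalez} that $\mathcal{R}_{n-k}^*$ carries $C^\infty(\mathcal{G}_{n-k,n})$ onto $C^\infty(\R^n)$, and then says the remaining assertions follow by repeating the proof of Theorem~\ref{t:mappingproperty} with $C_c^\infty$, $C_0^\infty$ in place of $C_c$, $C_0$. You instead establish smoothness directly by differentiating under the integral and identifying each $\partial^\alpha \mathcal{R}_{n-k}^*\psi$ as $\mathcal{R}_{n-k}^*\phi_\alpha$ for an explicit $\phi_\alpha$ that inherits the support hypothesis of $\psi$.

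Your route is more elementary and self-contained; it also makes transparent exactly why every derivative of $\mathcal{R}_{n-k}^*\psi$ enjoys the same decay or vanishing as $\mathcal{R}_{n-k}^*\psi$ itself, and why the operator is continuous for the natural Fr\'echet topologies --- points the paper's one-line proof leaves to the reader. The paper's route is shorter but imports a nontrivial surjectivity theorem that is stronger than what is actually needed here.
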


\begin{proof} By \cite[Theorem~5.1]{Gonzalez}, $\mathcal{R}_{n-k}^*(C^\infty(\mathcal{G}_{n-k,n})) = C^\infty(\R^n)$. Repetition of the proof of Theorem~\ref{t:mappingproperty} and replacement of $C_c(\mathcal{G}_{n-k,n})$, $C_0(\mathcal{G}_{n-k,n})$ with the spaces $C_c^\infty(\mathcal{G}_{n-k,n})$, $C_0^\infty(\mathcal{G}_{n-k,n})$, respectively, give the result. 
\end{proof}

\begin{remark} The range of the dual Radon transform, $\mathcal{R}_{n-1}^*$ on the class of Schwartz test functions was studied in \cite{Solmon}, by employing Fourier analytic techniques. The classification of the range of $\mathcal{R}_{n-k}^*$ restricted to the class $C_c^\infty(\mathcal{G}_{n-k,n})$ (including the class $k=1$) appears to be a very difficult (and interesting) open question. 
    
\end{remark}

 Theorem~\ref{t:mappingproperty}(b) implies that $\mathcal{R}_{n-k}^*\psi \in C_0(\R^n)$ whenever $\psi \in C_0(\mathcal{G}_{n-k,n})$. Therefore, we may define the $(n-k)$-dimensional Radon transform of a measure $\mu \in M(\R^n)$ via duality: 
\begin{equation}\label{e:RadonDef}
\langle \mathcal{R} \mu, \psi\rangle_k = \langle \mu, \mathcal{R}_{n-k}^*\psi  \rangle \quad \text{ for all } \psi \in C_0(\mathcal{G}_{n-k,n}). 
\end{equation}
It follows that $\mathcal{R}_{n-k} \mu \in M(\mathcal{G}_{n-k,n})$, the space of signed Radon measures on $\mathcal{G}_{n-k,n}$ with finite total variation, and furthermore that $|R_{n-k}\mu|(\mathcal{G}_{n-k,n}) \leq |\mu|(\R^n)$, for each $\mu \in M(\R^n)$.  

Our next step is to establish an extension of the Fourier-Slice theorem, \eqref{e:FSThm}, to the space $M(\R^n)$, and to do so, we must extend $R_{n-k} \mu$ to act on $C_b(\mathcal{G}_{n-k,n})$. To do this, we have the following lemma.

\begin{lemma} \label{l:helpful}  Let $0 < k < n$ be an integer, and $\mu \in M(\R^n)$. Then, for every $H \in G_{n-k,n}$, the restriction $(\mathcal{R}_{n-k}\mu)_H(\cdot) \in M(H^\perp)$ is well-defined, where $(\mathcal{R}_{n-k}\mu)_H(\cdot) = \mathcal{R}_{n-k}\mu(H,\cdot)$ is the restriction of $\mathcal{R}_{n-k} \mu$ to $H^\perp$. Moreover, for every $H \in G_{n-k,n}$,
\[
\langle (\mathcal{R}_{n-k}\mu)_H(\cdot), \varphi \rangle_{H^\perp} = \langle \mu, \varphi\left(P_{H^\perp}(x)\right) \rangle_{H^\perp}, \quad \varphi  \in C_0(H^\perp),
\]
where $\langle \cdot, \cdot\rangle_{H^\perp}$ is pairing between $C_0(H^\perp)$ and $M(H^\perp)$. 

\end{lemma}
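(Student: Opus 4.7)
The plan is to define $(\mathcal{R}_{n-k}\mu)_H$ directly, via duality, as the bounded linear functional on $C_0(H^\perp)$ sending $\varphi \mapsto \langle \mu, \varphi \circ P_{H^\perp}\rangle$, apply the Riesz representation theorem to get a finite signed Radon measure on $H^\perp$, and then run a Fubini argument against \eqref{e:RadonDef} to confirm that this measure is genuinely the fibre of $\mathcal{R}_{n-k}\mu$ over $H$.

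First I would fix $H \in G_{n-k,n}$ and $\varphi \in C_0(H^\perp)$, and observe that $\varphi \circ P_{H^\perp}$ is continuous on $\R^n$ with $\|\varphi \circ P_{H^\perp}\|_\infty \leq \|\varphi\|_\infty$. Note that $\varphi \circ P_{H^\perp}$ is constant along translates of $H$, so it lies in $C_b(\R^n)$ rather than $C_0(\R^n)$; nonetheless the canonical extension of the finite signed Radon measure $\mu$ to $C_b(\R^n)$ given in \eqref{e:functionalextension} makes $\langle \mu, \varphi \circ P_{H^\perp}\rangle$ well defined, and one has the uniform bound
\[
|\langle \mu, \varphi \circ P_{H^\perp}\rangle| \leq \|\varphi\|_\infty \,|\mu|(\R^n).
\]
Linearity in $\varphi$ is immediate, so $\Lambda_H(\varphi) := \langle \mu, \varphi \circ P_{H^\perp}\rangle$ is a bounded linear functional on $C_0(H^\perp)$. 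The Riesz representation theorem then produces a unique signed Radon measure $(\mathcal{R}_{n-k}\mu)_H \in M(H^\perp)$ with $|(\mathcal{R}_{n-k}\mu)_H|(H^\perp) \leq |\mu|(\R^n)$ realizing $\Lambda_H$, which is exactly the claimed duality formula.

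To justify identifying this measure with the ``restriction'' of $\mathcal{R}_{n-k}\mu$ to the fibre over $H$, I would take an arbitrary $\psi \in C_0(\mathcal{G}_{n-k,n})$ and, combining \eqref{e:RadonDef} with the definition of $\mathcal{R}_{n-k}^*$, write
\[
\langle \mathcal{R}_{n-k}\mu, \psi\rangle_k = \langle \mu, \mathcal{R}_{n-k}^*\psi\rangle = \int_{\R^n}\int_{G_{n-k,n}} \psi(H, P_{H^\perp}(x))\, d\nu_{n-k}(H)\, d\mu(x).
\]
Since $\psi$ is bounded and both $|\mu|(\R^n)$ and $\nu_{n-k}(G_{n-k,n})$ are finite, Fubini's theorem permits exchanging the two integrals, after which the inner integral against $d\mu(x)$ equals $\Lambda_H(\psi(H,\cdot)) = \langle (\mathcal{R}_{n-k}\mu)_H, \psi(H,\cdot)\rangle_{H^\perp}$ by construction. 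This exhibits $(\mathcal{R}_{n-k}\mu)_H$ as the disintegration of $\mathcal{R}_{n-k}\mu$ along the bundle projection $\mathcal{G}_{n-k,n}\to G_{n-k,n}$, confirming that the notational identification $(\mathcal{R}_{n-k}\mu)(H,\cdot) = (\mathcal{R}_{n-k}\mu)_H$ is legitimate.

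The main obstacle is the mild but genuine subtlety that $\varphi \circ P_{H^\perp}$ lies in $C_b(\R^n)$ but not $C_0(\R^n)$, so the pairing with $\mu$ must be understood through the extension in \eqref{e:functionalextension} rather than the standard duality between $M(\R^n)$ and $C_0(\R^n)$. A brief dominated convergence argument with the cut-off $\chi(\cdot/r)$ as $r \to \infty$ shows that this extension coincides with ordinary integration of a bounded Borel function against the finite signed measure $\mu$, making the Fubini step in the second half literally applicable; the joint continuity of $(H,x)\mapsto P_{H^\perp}(x)$ then ensures the requisite measurability of $H\mapsto \langle (\mathcal{R}_{n-k}\mu)_H, \psi(H,\cdot)\rangle_{H^\perp}$, completing the argument.
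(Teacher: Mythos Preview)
Your proposal is correct and follows essentially the same approach as the paper: both identify $(\mathcal{R}_{n-k}\mu)_H$ with the push-forward of $\mu$ under the orthogonal projection $P_{H^\perp}$, realized as the functional $\varphi\mapsto\langle \mu,\varphi\circ P_{H^\perp}\rangle$ on $C_0(H^\perp)$. Your write-up is in fact more careful than the paper's, which simply asserts that ``as a set function, $\mathcal{R}_{n-k}\mu(H,\cdot)$ coincides with $\pi_*\mu$''; your Fubini verification against \eqref{e:RadonDef} and your explicit handling of the $C_b$-versus-$C_0$ issue via \eqref{e:functionalextension} fill in details the paper leaves implicit.
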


\begin{proof} Fix $H \in G_{n-k,n}$ and $\mu \in M(\R^n)$. For brevity, we set 
\[
\pi(x) := P_{H^\perp}(x),  \quad x \in \R^n. 
\]
and define the push-forward of $\mu$ under $\pi$ by $\pi_* \mu(E) = \mu(\pi^{-1}(B))$ for every Borel set $B \subset H^\perp$. Then, as a set function, it becomes apparent that $R_{n-k}\mu(H,\cdot)$ coincides with $\pi_* \mu$. 
Now we wish to view $\mu$ as a linear functional. Recall that the pullback of $\pi^*$ on test functions is understood via
\[
\pi^* \varphi := \varphi \circ \pi \in C_b(\R^n), \quad \varphi \in C_0(H^\perp). 
\]
Then, we may define the push-foward $\pi_* \mu$, as a linear functional, by 
\[
\langle \pi_*\mu, \varphi \rangle = \langle \mu, \pi^* \varphi \rangle = \langle \mu, \varphi(\pi(x)) \rangle, \quad \varphi \in C_0(H^\perp). 
\]
This completes the proof. 
\end{proof}

Similar to $\widehat{\mu}$, with $\mu \in M(\R^n)$, we can extend $\mathcal{F}_k$ from $L^1(\mathcal{G}_{n-k,n})$ to $M(\mathcal{G}_{n-k,n})$ via
\[
\mathcal{F}_k \nu(\omega) := \langle \nu, z \mapsto e^{-i z \cdot \omega } \rangle_{H^\perp} = \int_{H^\perp} e^{-i \omega \cdot z } d\nu(z), \quad H \in G_{n-k,n}, \omega \in H^\perp. 
\]
It is clear that $\mathcal{F}_k \nu$ is a bounded continuous function satisfying $\sup \mathcal{F}_k \nu \leq |\nu|(H^\perp)$ for every $H \in G_{n-k,n}$. 

We are now in a position to prove the Fourier-Slice Theorem for measures.

\begin{theorem}\label{t:FSMeasures} Let $0 < k <n$ be an integer, and fix $\mu \in M(\R^n)$ and $H \in G_{n-k,n}$. Then 
\[
\mathcal{F}_k [(\mathcal{R}_{n-k}\mu)_H](\omega) = \widehat{\mu} \mid_{H^\perp}(\omega), \quad \omega \in H^\perp. 
\]
Here $\widehat{\mu} \mid_{H^\perp}$ is the restriction of $\widehat{\mu}$  to $H^\perp$. In particular, $\mathcal{R}_{n-k} \colon M(\R^n) \to M(\mathcal{G}_{n-k})$ is an injective bounded linear operator. 
\end{theorem}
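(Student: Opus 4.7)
The plan is to reduce the Fourier-Slice identity to a one-line computation using Lemma~\ref{l:helpful}, which identifies the fiber $(\mathcal{R}_{n-k}\mu)_H$ with the push-forward $\pi_*\mu$ of $\mu$ under $\pi := P_{H^\perp}$. Injectivity will then fall out of this formula combined with $\bigcup_{H \in G_{n-k,n}} H^\perp = \R^n$ and the uniqueness of the Fourier transform on $M(\R^n)$.

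First, fix $H \in G_{n-k,n}$ and $\omega \in H^\perp$. The test function $\varphi_\omega(z) = e^{-i z \cdot \omega}$ lies in $C_b(H^\perp)$ rather than in $C_0(H^\perp)$, so I must extend the push-forward identity of Lemma~\ref{l:helpful} to bounded continuous functions. This is routine via the extension \eqref{e:functionalextension}: apply the $C_0$-version to the truncations $\chi(\cdot/r)\varphi_\omega$ and pass to the limit by dominated convergence, justified because $|\mu|(\R^n) < \infty$ and the integrand is uniformly bounded by $1$. The outcome is $\langle \pi_*\mu, \varphi_\omega\rangle_{H^\perp} = \langle \mu, \varphi_\omega \circ \pi\rangle$. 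Since $\omega \in H^\perp$ while $x - P_{H^\perp}(x) \in H$, we have $\omega \cdot P_{H^\perp}(x) = \omega \cdot x$, whence
\[
\mathcal{F}_k[(\mathcal{R}_{n-k}\mu)_H](\omega) = \int_{\R^n} e^{-i\omega \cdot P_{H^\perp}(x)} d\mu(x) = \int_{\R^n} e^{-i\omega \cdot x} d\mu(x) = \widehat{\mu}\mid_{H^\perp}(\omega),
\]
which is the claimed formula.

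For injectivity, suppose $\mathcal{R}_{n-k}\mu = 0$ in $M(\mathcal{G}_{n-k,n})$. Plugging test functions of product form $\psi(H,z) = \chi(H)\varphi(z)$ with $\chi \in C_c(G_{n-k,n})$ and $\varphi \in C_c(\R^n)$ into \eqref{e:RadonDef} and applying Fubini inside $\langle \mu, \mathcal{R}_{n-k}^*\psi\rangle$ yields $\int_{G_{n-k,n}} \chi(H) \langle \pi_*\mu, \varphi\mid_{H^\perp}\rangle d\nu_{n-k}(H) = 0$, so $\langle \pi_*\mu, \varphi\mid_{H^\perp}\rangle = 0$ for $\nu_{n-k}$-a.e.\ $H$. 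Choosing a countable dense family of $\varphi$'s in $C_c(\R^n)$ gives $\pi_*\mu = 0$ as a measure for $\nu_{n-k}$-a.e.\ $H$; the map $H \mapsto \pi_*\mu$ is weak-$*$ continuous because $H \mapsto P_{H^\perp}$ varies continuously on $G_{n-k,n}$, so the vanishing propagates to every $H$. Combined with the Fourier-Slice formula just established, $\widehat{\mu}$ vanishes on $H^\perp$ for every $H \in G_{n-k,n}$, and hence on all of $\R^n$. Uniqueness of the Fourier transform for Radon measures then forces $\mu = 0$, and boundedness of $\mathcal{R}_{n-k}$ was already noted.

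The main obstacle I anticipate is not the Fourier-Slice identity itself (which is essentially immediate once Lemma~\ref{l:helpful} is granted and the extension to $C_b$ is made), but rather the quantifier juggling in the injectivity step: upgrading "for each $\varphi$, $\nu_{n-k}$-a.e.\ $H$" to "for $\nu_{n-k}$-a.e.\ $H$, all $\varphi$", and then promoting the a.e.\ statement to hold for every $H$ via weak-$*$ continuity of $H \mapsto \pi_*\mu$. Both steps are standard but deserve care, since without them the Fourier-Slice formula could only conclude that $\widehat{\mu}$ vanishes on the union of $H^\perp$ over a set of $H$ of full measure, which is itself all of $\R^n$ up to a null set, sufficient for $\mu = 0$ by continuity of $\widehat{\mu}$.
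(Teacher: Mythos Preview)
Your proof is correct and follows essentially the same approach as the paper: both derive the Fourier--Slice identity directly from Lemma~\ref{l:helpful} (with the extension to $C_b$ test functions), and both deduce injectivity from the fact that $\bigcup_{H} H^\perp = \R^n$ together with uniqueness of the Fourier transform. Your injectivity argument is more careful than the paper's: the paper simply asserts that $\mathcal{R}_{n-k}\mu = 0$ forces $(\mathcal{R}_{n-k}\mu)_H = 0$ for every $H$ and applies the Fourier--Slice formula, whereas you justify this passage via product test functions, a countable density argument, and weak-$*$ continuity of $H \mapsto \pi_*\mu$ --- and you correctly observe that even the a.e.\ version already suffices by continuity of $\widehat{\mu}$.
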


\begin{proof} Set $\nu = \mathcal{R}_{n-k}\mu(H,\cdot)$ Applying the final conclusion of Lemma~\ref{l:helpful}, we have 
\begin{align*}
\mathcal{F} \nu(\omega) &=  \langle \nu, \omega \mapsto e^{-i \omega \cdot z} \rangle_{H^\perp}= \left\langle \mu, \xi \mapsto e^{-i \omega \cdot P_{H^\perp}(\xi)} \right\rangle=\widehat{\mu} \mid_{H^\perp}(\omega),
\end{align*}
and the proof of the first part is complete. 

To establish the final part, note that if $\mathcal{R}_{n-k}\mu = 0$, then the Fourier-Slice theorem demands that, for every $H \in G_{n-k,n}$, we have 
\[
\mathcal{F}_k \mathcal{R}_{n-k}\mu(H, \omega) = \widehat{\mu} \mid_{H^\perp}(\omega) = 0
\]
for all $\omega \in H^\perp$. Observing that $\bigcup_{H \in G_{n-k,n}} H^\perp = \R^n$, it follows $\widehat{\mu}(\xi) = 0$ for every $\xi \in \R^n$. Therefore, $\mu = 0$, as required. 
\end{proof}

Combining Theorem~\ref{t:mappingpropertiesinfinitelysmooth} with Theorem~\ref{t:FSMeasures}, we obtain the following useful result.

\begin{corollary}\label{coro:mapping} Let $0 <k <n$ be an integer. The map $\mathcal{R}_{n-k}^*$ is bounded linear operator and a bijection from $C_c^\infty(\mathcal{G}_{n-k,n})$ onto the subspace 
\[
\mathcal{R}_{n-k}^*(C_c^\infty(\mathcal{G}_{n-k,n})) := \{\varphi \in C_k^\infty(\R^n) \colon \varphi = \mathcal{R}_{n-k}^*g \text{ for some } g \in C_c^\infty(\mathcal{G}_{n-k,n})\}
\]
of $C_k^\infty(\R^n)$. 
    
\end{corollary}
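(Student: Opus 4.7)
My plan is to extract the three ingredients of the statement — boundedness, linearity, and bijection onto the image — from the preceding theorems, reducing the nontrivial content to the injectivity of $\mathcal{R}_{n-k}^*$ on $C_c^\infty(\mathcal{G}_{n-k,n})$. Linearity is immediate from the integral definition of the dual Radon transform, and boundedness together with the containment of the image in $C_k^\infty(\R^n)$ is exactly the content of Theorem~\ref{t:mappingpropertiesinfinitelysmooth}: part~(a) places $\mathcal{R}_{n-k}^*\psi$ in $C_k^\infty(\R^n)$, while the sup-norm estimate in part~(b) provides continuity. Surjectivity onto the set $\mathcal{R}_{n-k}^*(C_c^\infty(\mathcal{G}_{n-k,n}))$ is tautological since this subspace is defined as the image. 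Hence the only real work is to show that $\mathcal{R}_{n-k}^*\psi \equiv 0$ forces $\psi = 0$ whenever $\psi \in C_c^\infty(\mathcal{G}_{n-k,n})$.

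For injectivity I would argue by duality, leveraging Theorem~\ref{t:FSMeasures}. Suppose $\mathcal{R}_{n-k}^*\psi = 0$ for some $\psi \in C_c^\infty(\mathcal{G}_{n-k,n})$. For any $\varphi \in C_c^\infty(\R^n)$, regarded as a measure $\mu = \varphi\,dx \in M(\R^n)$, the duality relation \eqref{e:RadonDef} combined with Fubini gives
\[
0 \;=\; \int_{\R^n}\varphi(x)\mathcal{R}_{n-k}^*\psi(x)\,dx \;=\; \int_{G_{n-k,n}}\int_{H^\perp}\psi(H,z)\,\mathcal{R}_{n-k}\varphi(H,z)\,dz\,d\nu_{n-k}(H).
\]
Applying Plancherel on each $k$-dimensional fiber $H^\perp$ and invoking the Fourier-Slice identity \eqref{e:FSThm} in the form $\mathcal{F}_k\mathcal{R}_{n-k}\varphi(H,\cdot) = \widehat{\varphi}\!\mid_{H^\perp}$, the inner integral becomes a constant multiple of $\int_{H^\perp}\mathcal{F}_k\psi(H,\omega)\,\widehat{\varphi}(-\omega)\,d\omega$. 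Using the bispherical-type coordinates introduced in the proof of Theorem~\ref{t:mappingproperty} (via the map $\Psi(\theta,\omega,\beta)=\cos(\beta)\theta+\sin(\beta)\omega$) to unfold the double integral over $\{(H,\omega):H\in G_{n-k,n},\,\omega\in H^\perp\}$ into an integral over $\R^n$, the identity reduces to $\int_{\R^n}\Phi(\omega)\widehat{\varphi}(-\omega)\,d\omega = 0$ for every $\varphi\in C_c^\infty(\R^n)$, where $\Phi$ is a continuous function explicitly built out of $\mathcal{F}_k\psi$ together with a Jacobian factor coming from bispherical coordinates. Since $\{\widehat{\varphi}:\varphi\in C_c^\infty(\R^n)\}$ is dense in the space of continuous rapidly decreasing functions, this forces $\Phi\equiv 0$, hence $\mathcal{F}_k\psi(H,\cdot)\equiv 0$ for a.e.\ $H\in G_{n-k,n}$. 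Because $\psi(H,\cdot)\in C_c^\infty(H^\perp)$, Fourier inversion on each fiber $H^\perp$ yields $\psi\equiv 0$.

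The main technical obstacle I anticipate is the change of variables turning the integral over $\bigcup_{H}H^\perp$ (weighted by $d\nu_{n-k}$) into an integral over $\R^n$; this requires calibrating the bispherical Jacobian and showing that the resulting weight does not vanish on an open set, so that the concluding density argument forces pointwise vanishing of $\mathcal{F}_k\psi$ rather than merely a weighted average. Once that coordinate computation is in hand, the rest of the argument is a routine Fubini/Plancherel exercise using the smoothness and compact support of $\psi$ to justify all interchanges of integration.
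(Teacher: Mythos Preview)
Your overall strategy matches the paper's proof exactly: the paper simply writes ``Combining Theorem~\ref{t:mappingpropertiesinfinitelysmooth} with Theorem~\ref{t:FSMeasures}'' and states the corollary, so linearity, boundedness, and the containment of the image in $C_k^\infty(\R^n)$ are indeed read off from Theorem~\ref{t:mappingpropertiesinfinitelysmooth}, while the Fourier--Slice theorem (Theorem~\ref{t:FSMeasures}) is the intended tool for the remaining injectivity statement. So at the level of which theorems to invoke, you and the paper agree.

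Where your detailed injectivity argument runs into trouble is the ``unfolding'' step. After Plancherel and Fourier--Slice you have
\[
\int_{G_{n-k,n}}\int_{H^\perp}\mathcal{F}_k\psi(H,\omega)\,\overline{\widehat{\varphi}(\omega)}\,d\omega\,d\nu_{n-k}(H)=0,
\]
and you want to rewrite this as $\int_{\R^n}\Phi(\omega)\widehat{\varphi}(-\omega)\,d\omega=0$. The map $(H,\omega)\mapsto\omega$ with $\omega\in H^\perp$ is \emph{not} one--to--one when $k>1$: for fixed $\omega\neq 0$ the fibre $\{H\in G_{n-k,n}:\omega\in H^\perp\}=\{H:H\subset\omega^\perp\}$ is a full sub-Grassmannian $G_{n-k,n-1}$, not a single point. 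Consequently the function $\Phi(\omega)$ you obtain is, up to a Jacobian factor, the \emph{average}
\[
\Phi(\omega)\;\propto\;\int_{\{H:\,\omega\in H^\perp\}}\mathcal{F}_k\psi(H,\omega)\,d\sigma(H),
\]
and $\Phi\equiv 0$ tells you only that these fibre averages vanish, not that $\mathcal{F}_k\psi(H,\omega)=0$ for each individual $H$. Your anticipated ``technical obstacle'' is therefore not a Jacobian/weight issue but a genuine loss of information from the many--to--one collapse; nonvanishing of the weight does nothing to recover the individual values from their average. (Your argument does go through verbatim when $k=1$, since then $H^\perp$ is a line and the fibre over each $\omega$ is a single subspace.) To close the gap for general $k$ you need a separate ingredient --- e.g.\ an inversion formula for $\mathcal{R}_{n-k}^*$ on compactly supported data, or an argument that the range $\mathcal{R}_{n-k}(M(\R^n))$ is rich enough to separate points of $C_c^\infty(\mathcal{G}_{n-k,n})$ --- rather than just the coordinate computation you outline.
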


To conclude the section, we require some notation. 

Following Helgason, \cite[pg.35]{Helgason2} (see also \cite{Helgason, Markoe}), we say that a function $g \in \mathcal{C}_c^\infty(\mathcal{G}_{n-k,n})$ satisfies property $(H)$ if, for each $j \in \mathbb{N}$, and $H \in G_{n-k,n}$, the function
\[
\mathcal{P}_{H,j}(z) = \int_{H^\perp}g(H + w) (w \cdot z)^j dw, z \in H^\perp
\]
is the restriction of a homogeneous polynomial of degree $j$ on $\R^n$. We denote this space of functions by $C_H^\infty(\mathcal{G}_{n-k,n})$.

The next lemma is due to Helgason, \cite[Ch.~1, Theorem~6.3]{Helgason2}. 

\begin{lemma} \label{l:additionalmapping} 

    
    The $(n-k)$-dimensional Radon transform is a bijection of $C_c^\infty(\R^n)$ onto $C_H^\infty(\mathcal{G}_{n-k,n})$. 

\end{lemma}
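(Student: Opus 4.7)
The plan is to prove well-definedness of the map $\mathcal{R}_{n-k}\colon C_c^\infty(\R^n) \to C_H^\infty(\mathcal{G}_{n-k,n})$, then injectivity, then surjectivity, with the last being the substantive step. For well-definedness, fix $\varphi \in C_c^\infty(\R^n)$ supported in $rB_2^n$ and set $g = \mathcal{R}_{n-k}\varphi$. Smoothness in $(H,z)$ is immediate by differentiation under the integral, compact support follows because $g(H,z) = 0$ whenever $z \in H^\perp$ has $|z|_2 > r$, and property (H) drops out of the calculation
\[
\mathcal{P}_{H,j}(z) = \int_{H^\perp}\!\int_H \varphi(y+w)(w\cdot z)^j\,dy\,dw = \int_{\R^n}\varphi(x)(x\cdot z)^j\,dx,
\]
valid for $z \in H^\perp$ because $y\cdot z = 0$ for $y \in H$, followed by the change of variables $x = y+w$; the right-hand side is a homogeneous polynomial of degree $j$ on $\R^n$, manifestly independent of $H$, whose restriction to $H^\perp$ is $\mathcal{P}_{H,j}$. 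Injectivity is immediate from the Fourier--Slice theorem \eqref{e:FSThm}: if $\mathcal{R}_{n-k}\varphi \equiv 0$ then $\widehat{\varphi}\mid_{H^\perp} \equiv 0$ for every $H \in G_{n-k,n}$, and the continuity of $\widehat{\varphi}$ together with $\bigcup_H H^\perp = \R^n$ forces $\widehat{\varphi} \equiv 0$ and hence $\varphi \equiv 0$.

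For surjectivity, fix $g \in C_H^\infty(\mathcal{G}_{n-k,n})$ supported in $G_{n-k,n} \times rB_2^k$. The Fourier--Slice identity demands that any preimage $\varphi$ satisfy $\widehat{\varphi}\mid_{H^\perp} = \mathcal{F}_k g(H,\cdot)$ for every $H$, so the strategy is to assemble a single function $\psi$ on $\R^n$ with $\psi\mid_{H^\perp} = \mathcal{F}_k g(H,\cdot)$ and then take $\varphi := \mathcal{F}^{-1}\psi$. For each $H$, Paley--Wiener on $H^\perp$ shows $\mathcal{F}_k g(H,\cdot)$ extends to an entire function of exponential type $r$ on the complexification of $H^\perp$, with Taylor expansion at the origin
\[
\mathcal{F}_k g(H,y) = \sum_{j=0}^{\infty} \frac{(-i)^j}{j!}\,\mathcal{P}_{H,j}(y).
\]
Property (H), read as the existence for each $j$ of a single homogeneous polynomial $P_j$ of degree $j$ on $\R^n$ satisfying $P_j\mid_{H^\perp} = \mathcal{P}_{H,j}$ for every $H$, yields the natural candidate $\psi(x) := \sum_j (-i)^j P_j(x)/j!$. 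The common support of $g$ in $G_{n-k,n} \times rB_2^k$ produces a bound $|\mathcal{F}_k g(H,y)| \leq C e^{r|y|_2}$ on the complexification of $H^\perp$ with $C$ independent of $H$, which translates via Cauchy's formula into uniform bounds on $\mathcal{P}_{H,j}$, and hence on the polynomials $P_j$. Consequently $\psi$ converges on all of $\R^n$ to an entire function of exponential type $r$; a second application of Paley--Wiener places $\varphi = \mathcal{F}^{-1}\psi$ in $C_c^\infty(\R^n)$ with support in $rB_2^n$, and $\mathcal{R}_{n-k}\varphi = g$ is a one-line Fourier--Slice check.

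The main obstacle is the gluing step encoded by property (H): the slices $\mathcal{F}_k g(H,\cdot)$ live on a continuum of distinct $k$-planes, and while each is a perfectly nice entire function on its own plane, nothing about the compact support and smoothness of $g$ alone forces them to be traces of a common function on $\R^n$. The moment condition is precisely the combinatorial constraint making the Taylor germs at the origin globally coherent across $H$, and Paley--Wiener then propagates this local coherence to global compatibility on $\R^n$. The delicate bookkeeping of matching polynomial extensions across varying $H \in G_{n-k,n}$ is where Helgason's original argument concentrates, and is the step a full write-up must handle with care.
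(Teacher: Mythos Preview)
The paper does not prove this lemma; it is stated with attribution to Helgason \cite[Ch.~1, Theorem~6.3]{Helgason2} and used as a black box. Your sketch is precisely the classical argument from that reference: well-definedness via the moment computation, injectivity via Fourier--Slice, and surjectivity by gluing the partial Fourier transforms $\mathcal{F}_k g(H,\cdot)$ through property~(H) and invoking Paley--Wiener. One point glossed over in your outline is that the Paley--Wiener characterization of $C_c^\infty(\R^n)$ requires not only exponential type but also rapid decrease of $\psi$ along real directions; this follows from the smoothness of $g$ (giving Schwartz decay of each $\mathcal{F}_k g(H,\cdot)$ with bounds uniform in $H$, hence of $\psi$ on every $H^\perp$ and thus on $\R^n$), but should be stated in a full write-up. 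Your closing caveat about the polynomial-extension bookkeeping is apt: that compatibility step is indeed where Helgason's proof concentrates its effort, and your sketch correctly identifies it as the substantive part rather than attempting to finesse it.
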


Finally, we examine Sobolev functions. Let $s \geq 0$, and consider the Sobolev spaces 
\begin{align*}
W^s(\R^n) &= \{\varphi \in L^2(\R^n) \colon |\cdot|_2^s \widehat{f}(\cdot) \in L^2(\R^n)\},\\
W^s(\mathcal{G}_{n-k,n}) &= \{g \in L^2(\R^n) \colon |\cdot|_2^s \mathcal{F}_k g(H,\cdot) \in L^2(\mathcal{G}_{n-k,n})\}.
\end{align*}
Denote by $W_c^s$ those compactly supported functions in $W^s$. Finally, consider the subset $W_H^s(\mathcal{G}_{n-k,n})$ of all measurable functions $g \colon \mathcal{G}_{n-k,n} \to \R$ satisfying the following conditions: there is a compact convex set in $\R^n$ such that $g$ vanishes whenever it misses $K$, and for every nonnegative integer $m$, there is a homogeneous polynomial $P_m$ of degree $m$ on $\R^n$ such that the restriction of $P_m\mid_{H^\perp}$ to the $k$-plane $H^\perp$ satisfies
$
P_m\mid_{H^\perp}(z) = \int_{H^\perp}(z \cdot y)^m g(H,z) dz. 
$

The next result is due to Helgason \cite{Helgason1,Helgason3}, Ludwig \cite{Helgason} and Solmon \cite{Solmon2}. 

\begin{lemma} \label{l:Sobmapping} The $(n-k)$-dimensional Radon transform is a bijection of $W_c^{s}(\R^n)$ onto $W_H^{s + \frac{n-k}{2}}(\mathcal{G}_{n-k,n})$. 
    
\end{lemma}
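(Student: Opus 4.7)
The plan is to reduce both sides to Plancherel-type identities on $\R^n$ via the Fourier-Slice Theorem and an integral-geometric change of variables, then bootstrap from the smooth bijection of Lemma~\ref{l:additionalmapping} to the Sobolev setting by density. The crucial computation is the identity
\[
\int_{G_{n-k,n}} \int_{H^\perp} F(z)\, dz\, d\nu_{n-k}(H) = c_{n,k} \int_{\R^n} F(\xi) |\xi|_2^{-(n-k)} d\xi,
\]
obtained by writing $dz = r^{k-1}\, dr\, d\sigma_{k-1}(\theta)$ on each $H^\perp$, using the rotational invariance of Haar measure to average the spherical piece over $S^{n-1}$, and recognizing the result as polar coordinates on $\R^n$ with the weight $|\xi|_2^{-(n-k)}$.

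The forward direction runs as follows. For $f \in C_c^\infty(\R^n)$, the Fourier-Slice Theorem (Theorem~\ref{t:FSMeasures} applied on $L^1 \cap L^2$) gives $\mathcal{F}_k \mathcal{R}_{n-k} f(H,z) = \widehat{f}(z)$ for $z \in H^\perp$. Combining this with the displayed identity yields
\[
\|\mathcal{R}_{n-k} f\|_{W^{s + (n-k)/2}(\mathcal{G}_{n-k,n})}^2 = \int_{G_{n-k,n}} \int_{H^\perp} |z|_2^{2s + n - k} |\widehat{f}(z)|^2 dz\, d\nu_{n-k}(H) = c_{n,k} \|f\|_{W^s(\R^n)}^2,
\]
so $\mathcal{R}_{n-k}$ is an isometry (up to constant) from $W_c^s(\R^n)$ into the Sobolev space on $\mathcal{G}_{n-k,n}$. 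Property $(H)$ on the image is provided by Lemma~\ref{l:additionalmapping} for smooth compactly supported inputs, and it persists under Sobolev limits since the polynomial moments are continuous linear functionals in the relevant topology. The compact-support condition in the definition of $W_H^{s+(n-k)/2}$ is automatic: if $\mathrm{supp}(f) \subset K$ with $K$ compact convex, then $\mathcal{R}_{n-k} f(H,z) = 0$ whenever the affine plane $H + z$ misses $K$. Injectivity is immediate from the Fourier-Slice identity and $\bigcup_{H \in G_{n-k,n}} H^\perp = \R^n$, exactly as in the proof of Theorem~\ref{t:FSMeasures}.

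The main obstacle is surjectivity. Given $g \in W_H^{s + (n-k)/2}(\mathcal{G}_{n-k,n})$ vanishing unless the $(n-k)$-plane hits the compact convex set $K$, I set $G_H(\xi) := \mathcal{F}_k g(H,\xi)$ for $\xi \in H^\perp$. By a Paley-Wiener argument, each $G_H$ extends to an entire function of exponential type on the complexification of $H^\perp$ with growth controlled by the supporting function of $K$; property $(H)$ forces the Taylor expansions at $0$ of the $G_H$ to agree on overlaps $H_1^\perp \cap H_2^\perp$, and analytic continuation then propagates agreement globally. This patches $\{G_H\}_{H \in G_{n-k,n}}$ into a single entire function $G$ on $\R^n$, and running the norm identity in reverse places $f := \mathcal{F}^{-1}G$ into $W^s(\R^n)$. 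The final delicate step is $\mathrm{supp}(f) \subset K$, which is Helgason's support theorem: the exponential-type bound on $G$ together with the convexity of $K$ yields the needed decay of $\widehat{f}$ on complex neighborhoods and hence the support containment. Composing $\mathcal{R}_{n-k}$ with the just-constructed inverse recovers $g$, giving the bijection.
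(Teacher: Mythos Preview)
The paper does not prove this lemma at all: it is stated as a known result attributed to Helgason, Ludwig, and Solmon, with citations to \cite{Helgason1,Helgason3,Helgason,Solmon2}, and no argument is given. There is therefore nothing on the paper's side to compare against.

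Your sketch is essentially the classical argument from those references. The norm identity via the Fourier--Slice theorem and the Blaschke--Petkantschin-type change of variables is the standard way to see that $\mathcal{R}_{n-k}$ is, up to a constant, a Sobolev isometry with the $(n-k)/2$ shift; injectivity then follows exactly as you say. For surjectivity your outline is also the orthodox one: fiberwise Fourier transforms of a compactly supported $g$ are entire of exponential type on each $H^\perp$ by Paley--Wiener, property $(H)$ forces their Taylor coefficients at the origin to be restrictions of homogeneous polynomials on $\R^n$, and analyticity propagates this compatibility to a globally defined entire $G$ whose inverse Fourier transform has the correct Sobolev regularity and, by the exponential-type bound, the correct compact support. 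The two places where your sketch is thin are (i) the claim that property $(H)$ ``persists under Sobolev limits'' --- this needs the observation that the moment functionals are continuous on compactly supported Sobolev functions with a fixed support, which is true but worth stating --- and (ii) the passage from exponential type on each $H^\perp$ to exponential type on all of $\mathbb{C}^n$ for the patched function, which requires a uniformity in $H$ coming from the single compact convex $K$ controlling the support of $g$. Both points are handled in the cited sources.
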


\section{A classification result for Problem~\ref{p:cpldRt}} \label{sec:solution}

In this section, we provide a classification for the comparison problem, Problem~\ref{p:NonhomogeneousBPproblem}. The case $p =1$ is relatively simple, while the case $p \neq 1$ requires a bit more work. We begin with the former case, $p =1$. 

\begin{proposition}\label{prop:p=1} Let $0 < k < n$ be an integer. Given a pair of continuous functions $g, h \colon \mathcal{G}_{n-k,n} \to [0,\infty)$ in $L^1(\mathcal{G}_{n-k,n},w)$ such that $\mathcal{R}_{n-k}^*g(x) \leq \mathcal{R}_{n-k}^*h(x)$ holds for all $x \in \R^n$, then 
\[
\|g\|_{L^1(\mathcal{G}_{n-k,n})} \leq\|h\|_{L^1(\mathcal{G}_{n-k,n})}.
\]
\end{proposition}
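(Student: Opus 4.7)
The plan is to combine the duality between $\mathcal{R}_{n-k}$ and $\mathcal{R}^*_{n-k}$ with an explicit computation of the Radon transform of a Euclidean ball, followed by a normalization and a limit. The starting point is to integrate the pointwise inequality $\mathcal{R}^*_{n-k}g\le\mathcal{R}^*_{n-k}h$ against the nonnegative test function $\chi_{RB_2^n}$ for each $R>0$; since both $\mathcal{R}^*_{n-k}g$ and $\mathcal{R}^*_{n-k}h$ are bounded and continuous by Theorem~\ref{t:mappingproperty}(b), all integrals over $RB_2^n$ are finite.

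Because every integrand is nonnegative, Tonelli's theorem yields the duality identity
\[
\int_{RB_2^n}\mathcal{R}^*_{n-k}g(x)\,dx \;=\; \int_{G_{n-k,n}}\int_{H^\perp}\mathcal{R}_{n-k}\chi_{RB_2^n}(H,z)\,g(H,z)\,dz\,d\nu_{n-k}(H),
\]
and similarly with $g$ replaced by $h$. Decomposing $x=y+z$ with $y\in H$ and $z\in H^\perp$ (so $|y+z|_2^2=|y|_2^2+|z|_2^2$), one obtains the explicit formula
\[
\mathcal{R}_{n-k}\chi_{RB_2^n}(H,z) \;=\; \omega_{n-k}\,(R^2-|z|_2^2)_+^{(n-k)/2},
\]
where $\omega_{n-k}$ denotes the volume of the unit Euclidean ball in dimension $n-k$.

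Next I would normalize by $R^{-(n-k)}$ and let $R\to\infty$. The normalized weight $\omega_{n-k}(1-|z|_2^2/R^2)_+^{(n-k)/2}$ is bounded above by $\omega_{n-k}$ and converges pointwise to $\omega_{n-k}$ as $R\to\infty$. Since $g\in L^1(\mathcal{G}_{n-k,n})$, the dominated convergence theorem gives
\[
\lim_{R\to\infty}R^{-(n-k)}\int_{RB_2^n}\mathcal{R}^*_{n-k}g(x)\,dx \;=\; \omega_{n-k}\,\|g\|_{L^1(\mathcal{G}_{n-k,n})},
\]
with the analogous identity for $h$. Dividing the hypothesized pointwise inequality by $R^{n-k}$, integrating over $RB_2^n$, passing to the limit, and canceling the positive factor $\omega_{n-k}$ produces $\|g\|_{L^1(\mathcal{G}_{n-k,n})}\le\|h\|_{L^1(\mathcal{G}_{n-k,n})}$.

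There is essentially no serious obstacle: the nonnegativity of $g$, $h$, and $\chi_{RB_2^n}$ makes every invocation of Tonelli automatic, the Radon transform of a ball is explicit, and the uniform bound $\omega_{n-k}$ on the normalized weight furnishes the required dominating function. The only conceptual point, and the reason this specific test function is chosen, is that $R^{-(n-k)}\mathcal{R}_{n-k}\chi_{RB_2^n}$ acts as an approximation of the constant $\omega_{n-k}$ well suited to recovering the $L^1$ norm on $\mathcal{G}_{n-k,n}$ from the dual transform side.
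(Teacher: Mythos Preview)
Your argument is correct and follows the same overall strategy as the paper: integrate the pointwise hypothesis against a nonnegative test function on $\R^n$, pass to the Grassmannian side by duality, and recover the $L^1$ norms via dominated convergence. The only difference is the choice of test family. The paper uses the Gaussians $f_\epsilon(x)=C(n,k,\epsilon)^{-1}e^{-\epsilon|x|_2^2}$, whose $(n-k)$-plane transform is exactly $e^{-\epsilon|z|_2^2}$, and sends $\epsilon\to 0^+$; you use $\chi_{RB_2^n}$, compute $\mathcal{R}_{n-k}\chi_{RB_2^n}(H,z)=\omega_{n-k}(R^2-|z|_2^2)_+^{(n-k)/2}$, normalize by $R^{-(n-k)}$, and send $R\to\infty$. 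Either family produces a uniformly bounded weight on $\mathcal{G}_{n-k,n}$ converging pointwise to a positive constant, so the limiting step is identical. One small remark: your appeal to Theorem~\ref{t:mappingproperty}(b) for boundedness of $\mathcal{R}^*_{n-k}g$ presumes $g\in C_0(\mathcal{G}_{n-k,n})$, which the proposition does not explicitly assume; however this is harmless, since nonnegativity makes Tonelli automatic and finiteness on the Grassmannian side already follows from $g\in L^1$ together with the uniform bound $\mathcal{R}_{n-k}\chi_{RB_2^n}\le \omega_{n-k}R^{n-k}$.
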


\begin{proof} To begin, let $a > 0$ and consider the function $f(x) = e^{-a |x|_2^2}$. We can to compute $\mathcal{R}_{n-k}f$ explicitly.  Fix an arbitrary $(H,z) \in \mathcal{G}_{n-k,n}$. Integrating in polar coordinates in $H$, followed by the change of variables $t = (\sqrt{a}r)^2$, we obtain 
\begin{align*}
&\mathcal{R}_{n-k}f(H,z) = \int_{H}f(y+z) dy\\
&= \int_{H \cap S^{n-1}} \int_0^\infty r^{n-k-1}f(r\theta +z) dr d\theta= \int_{H \cap S^{n-1}} \int_0^\infty r^{n-k-1} e^{-a[|z|_2^2 + r^2]} dr d\theta\\
&= |S^{n-k-1}| e^{-a |z|_2^2} \int_0^\infty  e^{-(\sqrt{a}r)^2} r^{n-k-1}dr= C(n,k,a) \left(\frac{\pi}{a}\right)^{\frac{n-k}{2}} e^{-a |z|_2^2},
\end{align*}
where in the third step we have used the fact that $y$ and $z$ are orthogonal, and where 
\[
C(n,k,a) =  |S^{n-k-1}| \Gamma\left(\frac{n-k}{2}\right) a^{-\frac{n-k}{2}}.
\]
Thus, 
\[
\mathcal{R}_{n-k}f(H,z) = C(n,k,a) e^{-a |z|_2^2} \quad \text{ for all } (H,z) \in \mathcal{G}_{n-k,n}. 
 \]

For each $\e >0$ consider the function
$
f_{\e}(x) = \frac{1}{C(n,k,\e)}e^{-\e |x|_2^2}.$ Fix $\e> 0$. Multiplying both sides of the inequality $\mathcal{R}^*g(x) \leq \mathcal{R}^*h(x)$ by $f_{\e}$, and using duality, we observe
\begin{align*}
&\int_{G_{n-k,n}}\int_{H^\perp} g(H,z) e^{-\e |z|_2^2}dz d\nu_{n-k}(H)= \int_{G_{n-k,n}}\int_{H^\perp} g(H,z) \mathcal{R}_{n-k}f_{\e}(H,z) dz d\nu_{n-k}(H)\\
&= \int_{\R^n} f_{\e}(x) \mathcal{R}_{n-k}^*g(x) dx \leq \int_{\R^n} f_{\e}(x) \mathcal{R}_{n-k}^*h(x) dx\\
&=\int_{G_{n-k,n}}\int_{H^\perp} h(H,z)\mathcal{R}_{n-k}f_{\e}(H,z)dz d\nu_{n-k}(H)\\
&=\int_{G_{n-k,n}}\int_{H^\perp} h(H,z) e^{-\e |z|_2^2}dz d\nu_{n-k}(H). 
\end{align*}
We have shown that 
\[ \int_{G_{n-k,n}}\int_{H^\perp} [h(H,z) -g(H,z)] e^{-\e |z|_2^2}dz d\nu_{n-k}(H) \geq 0 \] 
is valid for every $\e>0$. Since $h-g \in L^1(\mathcal{G}_{n-k,n})$, and since $e^{-\e |z|_2^2} \leq 1$ for every $\e>0$. Thus, we can apply the dominated convergence theorem to conclude that $\|g\|_{L^1(\mathcal{G}_{n-k,n})} \leq\|h\|_{L^1(\mathcal{G}_{n-k,n})}.$ 
\end{proof}

Denote by $M^+(\R^n)$ the subset of $M(\R^n)$ of all nonnegative measures. Recall the following definition.  

\begin{definition} Let $0 < k <n$ be an integer, and let $p >1$. We define the class of $(p,k)$-admissible functions by 
\[
\mathcal{A}_p^{k} := \left\{h \in C_0(\mathcal{G}_{n-k,n}) \colon h \geq 0 \text{ and } h^{p-1} = \mathcal{R}_{n-k} \mu \text{ for some } \mu \in \mathcal{M}^+(\R^n) \right\}. 
\]
We also have the following important subclass of $\mathcal{I}_p^k$:
\begin{align*}
\mathcal{A}_{p,\infty}^k:= &\left\{h \in C_H^\infty(\mathcal{G}_{n-k,n}) \colon h \geq 0, h^{p-1} \in C_H^\infty(\mathcal{G}_{n-k,n}), \right.\\
&\left. \text{ and } h^{p-1} = \mathcal{R}_{n-k} \varphi  \text{ for some } \varphi \in C_c^\infty(\R^n), \varphi \geq 0 \right\}.
\end{align*}
\end{definition}

It should be noted that (by the proof of Proposition~\ref{prop:p=1}) functions $h\colon \mathcal{G}_{n-k,n} \to \R$ of the form $h(H,z) = e^{-a|z|_2^2}, a >0$ belong to the class $\mathcal{A}_{p,k}$ for every $p > 1$. 

With the class of $(p,k)$-admissible functions in hand, we are in a position to give an affirmative answer to Problem~\ref{p:cpldRt}.

\begin{theorem}\label{t:affirmative} Let $0< k < n$ be an integer, and let $p>1$. Let $g,h \in C_0(\mathcal{G}_{n-k,n}) \cap L^p(\mathcal{G}_{n-k,n})$ be such that $\mathcal{R}^*g(x) \leq \mathcal{R}^*h$ holds. If $g \in \mathcal{A}_p^k$, then \[\|g\|_{L^p(\mathcal{G}_{n-k,n})}\leq \|h\|_{L^p(\mathcal{G}_{n-k,n})}.\]
  
\end{theorem}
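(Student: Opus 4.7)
The plan is to mimic the classical Lutwak/Koldobsky argument for the Busemann-Petty problem, but at the level of dual Radon transforms: combine the duality relation \eqref{e:RadonDef} with Hölder's inequality. May assume $\|g\|_{L^p(\mathcal{G}_{n-k,n})} > 0$, since otherwise the conclusion is immediate.

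Since $g \in \mathcal{A}_p^k$, there is some $\mu \in M^+(\R^n)$ with $g^{p-1} = \mathcal{R}_{n-k}\mu$ (interpreted as saying the Radon measure $\mathcal{R}_{n-k}\mu$ on $\mathcal{G}_{n-k,n}$ is absolutely continuous, with density $g^{p-1}$ with respect to the natural product measure $dz\, d\nu_{n-k}$). The key computation is then
\begin{align*}
\|g\|_{L^p(\mathcal{G}_{n-k,n})}^p &= \int_{G_{n-k,n}}\!\int_{H^\perp} g(H,z)\cdot g(H,z)^{p-1}\,dz\,d\nu_{n-k}(H) \\
&= \langle g, \mathcal{R}_{n-k}\mu\rangle_k = \langle \mu, \mathcal{R}_{n-k}^* g\rangle = \int_{\R^n} \mathcal{R}_{n-k}^* g(x)\,d\mu(x),
\end{align*}
where the second equality uses $g^{p-1} = \mathcal{R}_{n-k}\mu$ and the third is the duality definition \eqref{e:RadonDef}. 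This step is legitimate because $g \in C_0(\mathcal{G}_{n-k,n})$ qualifies as a test function in \eqref{e:RadonDef}, and by Theorem~\ref{t:mappingproperty}(b) the function $\mathcal{R}_{n-k}^* g$ lies in $C_0(\R^n)$, so it is integrable against the finite Radon measure $\mu$.

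Next, I would use the hypothesis $\mathcal{R}_{n-k}^* g \leq \mathcal{R}_{n-k}^* h$ together with the nonnegativity of $\mu$ to obtain
\[
\int_{\R^n}\mathcal{R}_{n-k}^* g\,d\mu \leq \int_{\R^n}\mathcal{R}_{n-k}^* h\,d\mu = \langle h, \mathcal{R}_{n-k}\mu\rangle_k = \int_{G_{n-k,n}}\!\int_{H^\perp} h(H,z)\, g(H,z)^{p-1}\,dz\,d\nu_{n-k}(H),
\]
again by \eqref{e:RadonDef}, and then Hölder's inequality with conjugate exponents $p$ and $p/(p-1)$ yields
\[
\int h\cdot g^{p-1} \,\leq\, \|h\|_{L^p(\mathcal{G}_{n-k,n})}\cdot \|g^{p-1}\|_{L^{p/(p-1)}(\mathcal{G}_{n-k,n})} \,=\, \|h\|_{L^p(\mathcal{G}_{n-k,n})}\cdot \|g\|_{L^p(\mathcal{G}_{n-k,n})}^{p-1}.
\]
Chaining the three estimates gives $\|g\|_{L^p}^p \leq \|h\|_{L^p}\|g\|_{L^p}^{p-1}$, and dividing by $\|g\|_{L^p}^{p-1}$ yields the desired inequality.

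The main subtlety is the justification of the identity $\langle g,\mathcal{R}_{n-k}\mu\rangle_k = \langle \mu,\mathcal{R}_{n-k}^*g\rangle$: the definition \eqref{e:RadonDef} is stated for $\psi \in C_0(\mathcal{G}_{n-k,n})$, and here we need to apply it with $\psi = g$, which is admissible precisely because of the assumption $g \in C_0(\mathcal{G}_{n-k,n})$ (likewise for $h$). The passage from the definition ``$g^{p-1} = \mathcal{R}_{n-k}\mu$'' (an identity of measures) to an integral against the density $g^{p-1}\,dz\,d\nu_{n-k}$ is harmless once one has verified that $g^{p-1} \in L^1_{\mathrm{loc}}$, which follows from $g \in C_0 \cap L^p$. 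Everything else is routine.
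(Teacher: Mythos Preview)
Your proof is correct and follows essentially the same approach as the paper: write $\|g\|_{L^p}^p = \langle g^{p-1}, g\rangle_k$, use $g^{p-1} = \mathcal{R}_{n-k}\mu$ and the duality relation \eqref{e:RadonDef} to pass to $\int \mathcal{R}_{n-k}^*g\,d\mu$, apply the pointwise hypothesis and $\mu \geq 0$, undo the duality, and finish with H\"older. Your added remarks about why $g,h \in C_0(\mathcal{G}_{n-k,n})$ are admissible test functions in \eqref{e:RadonDef} and the appeal to Theorem~\ref{t:mappingproperty}(b) are useful clarifications that the paper leaves implicit.
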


\begin{proof}   As $g \in \mathcal{A}_p^k$, there is a measure $\mu_g \in \mathcal{M}^+(\R^n)$ such that $g^{p-1} = \mathcal{R}_{n-k} \mu_g$. Using the definition of $\mathcal{R}\mu_g$ together with the assumption $\mathcal{R}^*g(x) \leq \mathcal{R}^*h(x)$, for all $x \in \R^n$, we may use the duality relations followed by H\"older's inequality to write
\begin{align*}
\|g\|_{L^p(\mathcal{G}_{n-k,n},w)}^p &= \int_{G_{n-k,n}}\int_{H^\perp} g^p(H,z)dz d\nu_{n-k}(H)\\
&= \langle g^{p-1}, g \rangle_k = \langle \mathcal{R}_{n-k}\mu_g, g\rangle_k= \langle \mu_g, \mathcal{R}_{n-k}^*g \rangle\\
&= \int_{\R^n} \mathcal{R}_{n-k}^*g(x) d\mu_g(x) \leq \int_{\R^n} \mathcal{R}_{n-k}^*h(x) d\mu_g(x)\\
&= \langle \mu_g, \mathcal{R}_{n-k}^*h \rangle = \langle \mathcal{R}_{n-k}\mu_g, h\rangle_k=  \langle g^{p-1}, h \rangle_k\\
&= \int_{G_{n-k,n}}\int_{H^\perp}g^{p-1}(H,z) h(H,z) dz d\nu_{n-k}(H)\\
&\leq \|g\|_{L^p(\mathcal{G}_{n-k,n})}^{p-1} \|h\|_{L^p(\mathcal{G}_{n-k,n})}.
\end{align*}
This proves the desired result. 
\end{proof}

Our next result shows that the class of $(k,p)$-admissible functions actually classifies the affirmative answer to Problem~\ref{p:cpldRt}.  

\begin{theorem}\label{t:counterexample} Let $0< k <n$ be an integer, and let $p >1$. Let $h \geq 0$ belong to $C_H^\infty(\mathcal{G}_{n-k,n})\setminus \mathcal{A}_{p,\infty}^k$. Then there exists some non-negative function $g \in C_c^\infty(\mathcal{G}_{n-k,n})$ such that $\mathcal{R}_{n-k}^*g  \leq\mathcal{R}_{n-k}^*h $, and yet $\|g\|_{L^p(\mathcal{G}_{n-k,n})} > \|h\|_{L^p(\mathcal{G}_{n-k,n})}$. 
\end{theorem}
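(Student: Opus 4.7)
The plan is to argue by contradiction via a Hahn--Banach / Lagrange-duality scheme. Suppose no counterexample $g$ exists, i.e.\ $\|g\|_{L^p(\mathcal{G}_{n-k,n})} \leq \|h\|_{L^p(\mathcal{G}_{n-k,n})}$ holds for every non-negative $g \in C_c^\infty(\mathcal{G}_{n-k,n})$ with $\mathcal{R}_{n-k}^* g \leq \mathcal{R}_{n-k}^* h$. The goal is to force $h \in \mathcal{A}_{p,\infty}^k$, contradicting the hypothesis. I focus on the generic case where $h^{p-1}$ lies in $C_H^\infty(\mathcal{G}_{n-k,n})$; the residual case where $h^{p-1}$ fails to satisfy property $(H)$ is handled by a separate smoothing/approximation argument.

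The first step is a first-order variation at $g = h$. For any $\eta \in C_c^\infty(\mathcal{G}_{n-k,n})$ satisfying $\mathcal{R}_{n-k}^*\eta \geq 0$ on $\R^n$ and $\eta \leq 0$ on $\{h = 0\}$, the family $g_t := h - t\eta$ lies in the admissible class for all sufficiently small $t > 0$: $g_t \in C_c^\infty(\mathcal{G}_{n-k,n})$, $g_t \geq 0$, and $\mathcal{R}_{n-k}^* g_t = \mathcal{R}_{n-k}^* h - t\mathcal{R}_{n-k}^*\eta \leq \mathcal{R}_{n-k}^*h$. The assumption $\|g_t\|_p \leq \|h\|_p$, differentiated at $t = 0^+$, then yields
\begin{equation*}
T(\eta) \;:=\; \int_{\mathcal{G}_{n-k,n}} h^{p-1}\eta \, d\nu \;\geq\; 0.
\end{equation*}
Since $\mathcal{R}_{n-k}^*$ is a bijection from $C_c^\infty(\mathcal{G}_{n-k,n})$ onto its image in $C_k^\infty(\R^n)$ (Corollary~\ref{coro:mapping}), the map $S(u) := T\bigl((\mathcal{R}_{n-k}^*)^{-1}u\bigr)$ is a well-defined linear functional on this image, non-negative on its non-negative elements. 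A Hahn--Banach extension preserving positivity --- implemented via Gonzalez's surjectivity of $\mathcal{R}_{n-k}^*$ on $C^\infty(\mathcal{G}_{n-k,n})$ \cite{Gonzalez}, together with the $|x|^{-k}$ decay from Theorem~\ref{t:mappingproperty}(a) to control a cutoff approximation of arbitrary non-negative compactly supported test functions --- then produces, via the Riesz representation theorem, a non-negative Radon measure $\mu \in M^+(\R^n)$ such that
\begin{equation*}
T(\eta) \;=\; \int_{\R^n} \mathcal{R}_{n-k}^*\eta\, d\mu \;=\; \langle \mathcal{R}_{n-k}\mu,\, \eta\rangle_k
\end{equation*}
for every $\eta \in C_c^\infty(\mathcal{G}_{n-k,n})$. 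Comparing the two expressions of $T$ forces $h^{p-1} = \mathcal{R}_{n-k}\mu$ in the distributional sense on $\mathcal{G}_{n-k,n}$.

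Since $h^{p-1} \in C_H^\infty(\mathcal{G}_{n-k,n})$, Lemma~\ref{l:additionalmapping} provides a unique $\varphi \in C_c^\infty(\R^n)$ with $h^{p-1} = \mathcal{R}_{n-k}\varphi$. The injectivity of $\mathcal{R}_{n-k}$ on measures (Theorem~\ref{t:FSMeasures}) identifies $\mu$ with $\varphi\,dx$, so $\varphi = d\mu/dx \geq 0$ because $\mu \geq 0$. This places $h$ in $\mathcal{A}_{p,\infty}^k$, contradicting the hypothesis, so a counterexample $g$ must exist. The principal obstacle is the positive-extension step: the range of $\mathcal{R}_{n-k}^*$ on compactly supported test functions is a proper subspace of $C_k^\infty(\R^n)$, so producing the non-negative Radon measure $\mu$ requires a careful order-preserving Hahn--Banach argument combined with the decay estimates on $\mathcal{R}_{n-k}^*$ to handle truncations; additional care is needed to manage the constraint $g_t \geq 0$ near $\{h = 0\}$ when choosing the admissible cone of variations $\eta$.
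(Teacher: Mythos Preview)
Your approach is genuinely different from the paper's, and the gaps you yourself flag are real and not minor. The paper does not argue by contradiction or invoke any Hahn--Banach machinery; it gives a direct construction. Since $h\in C_H^\infty\setminus\mathcal{A}_{p,\infty}^k$, Helgason's bijection (Lemma~\ref{l:additionalmapping}) produces $\varphi\in C_c^\infty(\R^n)$ with $h^{p-1}=\mathcal{R}_{n-k}\varphi$ and $\varphi<0$ on some open $\mathcal{O}$. One then picks, via Corollary~\ref{coro:mapping}, a single function $\eta\in\mathcal{R}_{n-k}^*(C_c^\infty(\mathcal{G}_{n-k,n}))$ with $\eta>0$ on $\mathcal{O}\cup(\R^n\setminus\operatorname{supp}\varphi)$ and $\eta=0$ on $\operatorname{supp}\varphi\setminus\mathcal{O}$, takes its preimage $f$, and sets $g=h-\epsilon f$. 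One line of duality plus H\"older,
\[
\|h\|_p^p=\langle h^{p-1},h\rangle_k=\int_{\R^n}\varphi\,\mathcal{R}_{n-k}^*h<\int_{\R^n}\varphi\,\mathcal{R}_{n-k}^*(h-\epsilon f)=\langle h^{p-1},g\rangle_k\le\|h\|_p^{p-1}\|g\|_p,
\]
finishes the argument; the strict inequality comes from $\int\varphi\,\eta<0$ by the sign choices.

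Your scheme is in spirit dual to this---both rest on the pairing $\langle h^{p-1},\eta\rangle_k=\int\varphi\,\mathcal{R}_{n-k}^*\eta$---but it tries to characterize the whole admissible cone rather than exhibit one element, and this is where it breaks. First, the variation step is not well posed: the condition $\eta\le0$ on $\{h=0\}$ does \emph{not} ensure $g_t=h-t\eta\ge0$ for small $t$ when $h$ vanishes to high order at $\partial\{h>0\}$ while $\eta>0$ nearby; you really need $\eta\le Ch$, and it is unclear that this stronger cone intersected with $\{\mathcal{R}_{n-k}^*\eta\ge0\}$ is rich enough to test positivity of $\varphi$. Second, and more seriously, the positive Hahn--Banach extension is unjustified: the range $\mathcal{R}_{n-k}^*(C_c^\infty(\mathcal{G}_{n-k,n}))$ is a proper subspace of $C_k^\infty(\R^n)$ whose description is an open problem (the paper remarks on this explicitly), and order-preserving extension requires cofinality of that subspace in the positive cone---a structural fact that neither Gonzalez's surjectivity onto $C^\infty$ nor the $|x|^{-k}$ decay supplies. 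Third, the case $h^{p-1}\notin C_H^\infty$ is simply deferred. The paper sidesteps all three issues by constructing one explicit $\eta$ instead of analyzing the whole constraint set.
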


\begin{proof} Consider some $h\geq 0$ belonging to  $C_H^\infty(\mathcal{G}_{n-k,n}) \setminus \mathcal{A}_{p,\infty}^k$. According to Lemma~\ref{l:additionalmapping}, there is a $\varphi \in C_c^{\infty}(\R^n)$ such that $h^{p-1} = \mathcal{R}_{n-k}\varphi$. Now, as $h \not \in \mathcal{A}_{p,\infty}^k$, there must be a non-empty bounded open set $\mathcal{O} \subset \R^n$ on which $\varphi$ is negative. In view of Corollary~\ref{coro:mapping}, select a function $\eta \in \mathcal{R}_{n-k}^*(C^\infty_c(\mathcal{G}_{n-k,n}))$ such that $\eta > 0$ on $ \mathcal{O} \cup (\R^n \setminus \text{supp}(\varphi))$ and $\eta = 0$ on $\text{supp}(\varphi) \setminus \mathcal{O}$, and again using Corollary~\ref{coro:mapping}, select $f \in C_c^\infty(\mathcal{G}_{n-k,n})$ such that $\eta = \mathcal{R}_{n-k}^* f$.

Define $g \in C_c^\infty(\mathcal{G}_{n-k,n})$ by $g = h- \e f$, where $\e >0$ is chosen small enough so that $g \geq 0$. 
Observe that, for every $x \in \R^n$, we have 
$
\mathcal{R}_{n-k}^*g = \mathcal{R}_{n-k}^*[h-\e f]= \mathcal{R}_{n-k}^*h- \e \eta \leq \mathcal{R}_{n-k}^*h$.
Next, invoking H\"older's inequality, we obtain 
\begin{align*}
&\|h\|_{L^p(\mathcal{G}_{n-k,n})}^p = \int_{G_{n-k,n}} \int_{H^\perp} h^p(H,z) dz d\nu_{n-k}(H)\\
&= \langle h^{p-1},h\rangle_k= \langle \mathcal{R}_{n-k} \varphi, h\rangle_k= \langle \mathcal{R}_{n-k}^*h, \varphi \rangle\\
&= \int_{\R^n}\varphi(x) \mathcal{R}_{n-k}^*h(x) dx\\
&< \int_{\R^n} \varphi(x) [\mathcal{R}_{n-k}^*h(x)-\e\eta(x)]dx =\int_{\R^n} \varphi(x) \mathcal{R}_{n-k}^*[h-\e f](x)dx\\
&= \langle R_{n-k}^*g, \varphi \rangle = \langle R_{n-k} \varphi, g \rangle_k =\langle h^{p-1},g\rangle_k\\
&=\int_{G_{n-k,n}} \int_{H^\perp} g(H, z) h^{p-1}(H,z) dz d\nu_{n-k}(H)\\
&\leq \|g\|_{L^p(\mathcal{G}_{n-k,n})} \|h\|_{L^p(\mathcal{G}_{n-k,n})}^{p-1}
\end{align*}
as required. This completes the proof. 
\end{proof}

Note that Theorem~\ref{t:basiccase} can be immediately deduced from Proposition~\ref{prop:p=1}, Theorem~\ref{t:affirmative} and Theorem~\ref{t:counterexample}.

\section{A slicing inequality for dual Radon transforms} \label{sec:slicing}

Inspired by the methods used to solve Problem~\ref{p:cpldRt}, we see to establish a general slicing inequality for dual Radon transforms. We begin with a definition. 

\begin{definition}\label{d:distance} Let $0 < k < n$ be an integer, $p >1$, and $w \colon \mathcal{G}_{n-k,n} \to [0,\infty)$ be a bounded continuous function.  Given a non-negative function $g \in C_0(\mathcal{G}_{n-k,n}) \cap L^p(\mathcal{G}_{n-k,n,w})$, we define the $(L^p,w)$-distance from $g$ to the class of $(p,k)$-admissible functions, $\mathcal{A}_{p,k}$, by 
\[
d_{p,w}(g,\mathcal{A}_p^k) := \inf \left\{\frac{\|f\|_{L^p(\mathcal{G}_{n-k,n},w)}}{\|g\|_{L^p(\mathcal{G}_{n-k,n},w)}} \colon f \geq 0, f \in \mathcal{A}_p^k \cap L^p(\mathcal{G}_{n-k,n},w) \right\}. 
\]
If $w \equiv 1$, then we simply write $d_{p}(g,\mathcal{A}_p^k)$.
\end{definition}

The main result in this section is the following analogue of the inequalities \eqref{e:functionalBourgain} and \eqref{e:KRZslicing} for the dual $(n-k)$-dimensional Radon transform.

\begin{theorem}\label{t:generalslicing} Let $0 < k < n$ be an integer, $p > 1$, and $h \geq 0$ and  $w \geq 0$ be a pair of functions in $C_b(\mathcal{G}_{n-k,n})$ such that $hw \in C_0(\mathcal{G}_{n-k,n})$, $h \in L^p(\mathcal{G}_{n-k,n},w)$ and $w \in  L^1(\mathcal{G}_{n-k,n})$. For any non-negative function $g \in C_0(\mathcal{G}_{n-k,n}) \cap L^p(\mathcal{G}_{n-k,n},w)$, the next estimate is valid: 
\begin{equation}\label{e:slicinginequality}
\frac{\|g\|_{L^p(\mathcal{G}_{n-k,n},w)}}{\|h\|_{L^p(\mathcal{G}_{n-k,n},w)}} \leq d_{p,w}(g,\mathcal{A}_p^k)^{p-1} \left\|\frac{\mathcal{R}_{n-k}^*[gw]}{\mathcal{R}_{n-k}^*[hw]}\right\|_{L^\infty(\R^n)}.
\end{equation}
\end{theorem}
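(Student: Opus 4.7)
The plan is to emulate the Hölder--duality argument that produced Theorem~\ref{t:affirmative}, but to carry it out with a general admissible $f\in\mathcal{A}_p^k$ in place of $g^{p-1}$, thereby obtaining a family of inequalities indexed by $f$ from which \eqref{e:slicinginequality} follows by passing to the infimum defining $d_{p,w}(g,\mathcal{A}_p^k)$. The essential feature of $\mathcal{A}_p^k$ that I would exploit is that every $f\in\mathcal{A}_p^k$ admits a non-negative representing measure $\mu_f\in\mathcal{M}^+(\R^n)$ with $f^{p-1}=\mathcal{R}_{n-k}\mu_f$; this is what makes both the duality identity \eqref{e:RadonDef} and the crucial monotonicity step against the pointwise $L^\infty$ bound available.

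Fix such an admissible $f$ (implicitly required, in the definition of $d_{p,w}$, to dominate $g$ pointwise, which is what allows the opening step to close). Writing $\Lambda := \|\mathcal{R}_{n-k}^*[gw]/\mathcal{R}_{n-k}^*[hw]\|_{L^\infty(\R^n)}$ for brevity, I would run the following chain:
\begin{align*}
\|g\|_{L^p(\mathcal{G}_{n-k,n},w)}^p
&= \int_{G_{n-k,n}}\!\int_{H^\perp} g^{p}\,w\,dz\,d\nu_{n-k}(H)
\le \int_{G_{n-k,n}}\!\int_{H^\perp} f^{p-1} g\,w\,dz\,d\nu_{n-k}(H) \\
&= \langle \mathcal{R}_{n-k}\mu_f,\,gw\rangle_k
= \langle \mu_f,\,\mathcal{R}_{n-k}^*[gw]\rangle \\
&\le \Lambda\,\langle \mu_f,\,\mathcal{R}_{n-k}^*[hw]\rangle
= \Lambda\,\langle \mathcal{R}_{n-k}\mu_f,\,hw\rangle_k \\
&= \Lambda \int_{G_{n-k,n}}\!\int_{H^\perp} f^{p-1} h\,w\,dz\,d\nu_{n-k}(H) \\
&\le \Lambda\,\|f\|_{L^p(\mathcal{G}_{n-k,n},w)}^{p-1}\,\|h\|_{L^p(\mathcal{G}_{n-k,n},w)}.
\end{align*}
The first inequality uses $g\le f$ together with $p>1$; the two switches between $\langle \mathcal{R}_{n-k}\mu_f,\cdot\rangle_k$ and $\langle \mu_f,\mathcal{R}_{n-k}^*\cdot\rangle$ are the duality definition \eqref{e:RadonDef}, which applies because $gw,hw\in C_0(\mathcal{G}_{n-k,n})$ (here $hw\in C_0$ is hypothesis, while $gw\in C_0$ follows from $g\in C_0$ and $w\in C_b$); the middle inequality integrates the pointwise bound $\mathcal{R}_{n-k}^*[gw]\le \Lambda\,\mathcal{R}_{n-k}^*[hw]$ against the non-negative measure $\mu_f$; and the final step is Hölder's inequality with conjugate exponents $p/(p-1)$ and $p$ against the weight $w$.

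Dividing by $\|g\|_{L^p(\mathcal{G}_{n-k,n},w)}^{p-1}\|h\|_{L^p(\mathcal{G}_{n-k,n},w)}$ and then taking the infimum over admissible $f$ yields \eqref{e:slicinginequality}. The step I expect to demand the most care is the opening estimate $\int g^{p}w\le\int f^{p-1} g\,w$: it is precisely where the domination built into the definition of $d_{p,w}(g,\mathcal{A}_p^k)$ is used, and it is the place where the structural restriction to $\mathcal{A}_p^k$ (rather than an arbitrary comparison function) becomes indispensable. A secondary technicality is that the $L^\infty$ ratio defining $\Lambda$ must be interpreted with the usual convention $0/0=0$ on the null set where $\mathcal{R}_{n-k}^*[hw]$ vanishes; because the pointwise inequality enters only under the integral against $\mu_f$, a short approximation reduces the argument to the non-degenerate regime.
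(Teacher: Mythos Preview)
Your proposal is correct and follows essentially the same route as the paper's own proof: fix an admissible $f\in\mathcal{A}_p^k$ with $f\ge g$, run the chain $\|g\|_{L^p(w)}^p\le\langle f^{p-1},gw\rangle_k=\langle\mu_f,\mathcal{R}_{n-k}^*[gw]\rangle\le\Lambda\langle\mu_f,\mathcal{R}_{n-k}^*[hw]\rangle=\Lambda\langle f^{p-1},hw\rangle_k\le\Lambda\|f\|_{L^p(w)}^{p-1}\|h\|_{L^p(w)}$, and then optimize over $f$. The only cosmetic differences are that the paper introduces an auxiliary $\varepsilon$ satisfying $\mathcal{R}_{n-k}^*[gw]\le\varepsilon\,\mathcal{R}_{n-k}^*[hw]$ and selects it to be the $L^\infty$ ratio at the end, and it passes to the infimum via a $(1+\delta)$-near-minimizer rather than directly; your observation that the definition of $d_{p,w}$ must carry the constraint $f\ge g$ (not merely $f\ge0$) is exactly what the paper's proof uses as well.
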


Theorem~\ref{t:basicslicing} can be deduced from the above theorem by applying \cite[Theorem~7.7(a)]{Solmon}. 

\begin{proof} Let $\e >0$ be such that $\mathcal{R}_{n-k}^*[gw](x) \leq \e \mathcal{R}_{n-k}^*[hw](x)$ holds for every $x \in \R^n$, and consider $f \in  \mathcal{A}_p^k \cap L^p(\mathcal{G}_{n-k,n},w)$ such that 
\[
f \geq g \quad \text{ and } \|f\|_{L^p(\mathcal{G}_{n-k,n},w)} \leq (1+\delta) d_{p,w}(g,\mathcal{A}_p^k) \|g\|_{L^p(\mathcal{G}_{n-k,n},w)}.
\]
As $f \in \mathcal{A}_p^k$, there must be a measure $\mu \in M^+(\R^n)$ such that $f^{p-1} = \mathcal{R}_{n-k} \mu$ as functionals on $C_0(\mathcal{G}_{n-k,n})$. Using the definition of $f$, the fact that $gw, hw \in C_0(\mathcal{G}_{n-k,n})$, followed by H\"older's inequality, and then again the choice of $f$, we can write 
\begin{align*}
\|g\|_{L^p(\mathcal{G}_{n-k,n},w)}^p 
&= \int_{G_{n-k,n}} g^{p-1}(H,z) g(H,z) w(H,z) dz d\nu_{n-k}(H)\\
&\leq \int_{G_{n-k,n}} f^{p-1}(H,z) g(H,z) w(H,z) dz d\nu_{n-k}(H)\\
&= \langle f^{p-1}, gw \rangle_k= \langle \mu, \mathcal{R}_{n-k}^*[gw] \rangle\\
&= \int_{\R^n} \mathcal{R}_{n-k}^*[gw](x) d\mu(x)\leq \e\int_{\R^n} \mathcal{R}_{n-k}^*[hw](x) d\mu(x)\\
&= \e \langle \mu, \mathcal{R}_{n-k}^*[hw] \rangle = \e \langle f^{p-1}, hw \rangle_k\\
&= \e \int_{G_{n-k,n}} f^{p-1}(H,z) h(H,z) w(H,z) dz d\nu_{n-k}(H)\\
&\leq \e \|f\|_{L^p(\mathcal{G}_{n-k,n},w)}^{p-1} \|h\|_{L^p(\mathcal{G}_{n-k,n},w)}\\
&\leq \e (1+\delta)^{p-1}d_{p,w}(g,\mathcal{A}_p^k)^{p-1}]\|g\|_{L^p(\mathcal{G}_{n-k,n},w)}^{p-1}\|h\|_{L^p(\mathcal{G}_{n-k,n},w)}.
\end{align*}
Select
\[
\e = \sup_{x \in \R^n}\left[\frac{\mathcal{R}_{n-k}^*[gw](x)}{\mathcal{R}_{n-k}^*[hw](x)} \right].
\]
Finally, sending $\delta \to 0^+$, we obtain the inequality \eqref{e:slicinginequality}. 

\end{proof}

We now prove Theorem~\ref{theorem:slicingaverage}, Theorem~\ref{t:slicing} and Theorem~\ref{t:meanvalue}.

\begin{proof}[Proof of Theorem~\ref{theorem:slicingaverage}]
By assumption, we have 
$
\frac{\beta}{\gamma}f\leq g \leq f$,
with $f(H,z) = \gamma e^{-\alpha |z|_2^2}$. We also have that $f^{p-1}$ belongs to $\mathcal{A}_{p,k}$. 
Raising both sides of the inequality $\frac{\beta}{\gamma} f(H,z) \leq g(H,z)$ to the $p$-th power, multiplying by $w(H,z)$ and integrating first over $H^\perp$ and then over $G_{n-k,n}$, and finally taking the $p$-th root, we obtain 
$
\frac{\beta}{\gamma} \|f\|_{L^p(\mathcal{G}_{n-k,n},w)} \leq \|g\|_{L^p(\mathcal{G}_{n-k,n},w)},$
which implies that $d_{p,w}(g,\mathcal{A}_{p,k}) \leq \frac{\gamma}{\beta}$,
and the theorem follows from Theorem~\ref{t:generalslicing} with $g = g$, $w=w$, and $h =1$.

\end{proof}

\begin{proof}[Proof of Theorem~\ref{t:slicing}] 

Suppose that $g^{p-1} \in W_H^{s+ \frac{n-k}{2}}(\mathcal{G}_{n-k,n})$ is non-negative and continuous. According to Lemma~\ref{l:Sobmapping}, there must exist some $\varphi \in W_c^s(\R^n)$ such that $g^{p-1} = \mathcal{R}_{n-k} \varphi$. According to Theorem~\ref{t:mappingproperty}, we then have that $\mathcal{R}_{n-k}^*g^{p-1} = O(|x|^{-k})$ as $|x|_2 \to \infty$.  Similarly, we note that $g = [\mathcal{R}_{n-k} \varphi]^{\frac{1}{p-1}}$ is in $C_c(\mathcal{G}_{n-k,n})$, and again use Theorem~\ref{t:mappingproperty} to conclude that $\mathcal{R}_{n-k}^*g(x) = O(|x|_2^{-k})$ as $|x|_2 \to \infty$. Therefore, we have 
$\mathcal{R}_{n-k}^*g(x) [\mathcal{R}_{n-k}^*g^{p-1}(x)]^{-1} = O(1)$
as $|x|_2 \to \infty$. 
Since $g$ and $g^{p-1}$ share the same support, $\mathcal{R}_{n-k}^*g(x) [\mathcal{R}_{n-k}^*g^{p-1}(x)]^{-1}$ is bounded. Finally, applying the inequality \eqref{e:slicinginequality} with $g = g$, $h = g^{p-1}$ and $w =1$, we obtain the desired result. 
\end{proof}

\begin{proof}[Proof of Theorem~\ref{t:meanvalue}] Since $g^{p-1} \in W_H^{s+\frac{n-k}{2}}(\mathcal{G}_{n-k,n})$, according to Lemma~\ref{l:Sobmapping}, there exists a function $\varphi \in W_c^s(\R^n)$ such that $g^{p-1} = \mathcal R_{n-k}\varphi$, and necessarily $g=[\mathcal{R}_{n-k}\varphi]^{\frac{1}{p-1}}$ is in $C_c(\mathcal{G}_{n-k,n})$, and so by employing Theorem~\ref{t:mappingproperty} it then follows that $\mathcal{R}_{n-k}^*[gw] = O(|x|_2^{-k})$ as $|x|_2 \to \infty$. Similarly, $\mathcal{R}_{n-k}^*w(x) = O(|x|_2^{-k})$ as $|x|_2 \to \infty$. Hence, 
\[
\mathcal{R}_{n-k}^*[gw](x) [\mathcal{R}_{n-k}^*w(x)]^{-1} = O(1) \text{ as } |x|_2 \to \infty. 
\]
Since $\text{supp}(gw) \subset \text{supp}(w)$, the result follows from Theorem~\ref{t:generalslicing} with $g = g$, $h=1$ and $w=w$.  
    
\end{proof}


{\bf Acknowledgments:} The author would like to thank Semyon Alesker, Dimitry Ryabogin and Artem Zvavitch for fruitful discussions. This material is based upon work supported by the National Science Foundation under
Grant No. DMS-1929284 while the authors were in residence at the Institute for Computational
and Experimental Research in Mathematics in Providence, RI, during the Harmonic Analysis and
Convexity program December 2024.









\end{document}